\numberwithin{equation}{section}
\theoremstyle{plain}
\newtheorem{theorem}{Theorem}[section]
\newtheorem{lemma}[theorem]{Lemma}
\newtheorem{proposition}[theorem]{Proposition}
\newtheorem{corollary}[theorem]{Corollary}
\theoremstyle{definition}
\begin{document}

\title[Characterizations of Lie Higher Derivations ]
{Characterizations of Lie Higher Derivations on $\mathcal{J}$-Subspace
Lattice Algebras}

\author{Dong Han and Feng Wei}

\address{Han: School of Mathematics and Information Science,
Henan Polytechnic University, Jiaozuo, 454003, P. R. China}

\email{lishe@hpu.edu.cn}

\address{Wei: School of Mathematics and Statistics,
Beijing Institute of Technology, Beijing, 100081, P. R. China}

\email{daoshuo@hotmail.com}\email{daoshuowei@gmail.com}

\begin{abstract}
Let $\mathcal{L}$ be a $\mathcal{J}$-subspace lattice on a Banach space $X$ over the
real or complex field $\mathbb{F}$ and $ \mathrm{Alg}\mathcal{L}$ be the associated $\mathcal{J}$-subspace
lattice algebras. In this paper, we characterize the structure of a family $\{L_n\}_{n=0}^{\infty}: \mathrm{Alg}\mathcal{L}\rightarrow \mathrm{Alg}\mathcal{L}$ of linear mappings satisfying the condition
$$
L_n([A, B])=\sum_{i+j=n}[L_i(A), L_j(B)]
$$ for any $A, B\in\mathrm{Alg}\mathcal{L}$ with $AB = 0$. Moreover,
the family $\{L_n\}_{n=0}^{\infty}: \mathrm{Alg}\mathcal{L}\rightarrow \mathrm{Alg}\mathcal{L}$ of linear mappings satisfying $L_n([A, B]_{\xi})=\sum_{i+j=n}[L_i(A), L_j(B)]_{\xi}
$ for any $A, B\in\mathrm{Alg}\mathcal{L}$ with $AB = 0$ and $1\neq \xi\in \mathbb{F}$ is also considered in the current work.
\end{abstract}

\subjclass[2000]{47B47, 17B40}

\keywords{$\mathcal{J}$-subspace
lattice algebras, ($\xi$)-Lie higher derivation, zero product.}

\thanks{This work of the first author is supported by the Doctor Foundation of Henan Polytechnic University (B2010-21), the Natural Science Research Program of Education Department of Henan Province (16A110031 and 15A110026)
and the National Natural Science Foundation of China (No.11301155).}

\maketitle

\section{Introduction}
\label{xxsec1}

Let $\mathcal{A}$ be an associative algebra over a field
$\mathbb{F}$. A linear mapping $\tau:
\mathcal{A}\longrightarrow \mathcal{A}$ is called a
\textit{derivation} if $\tau(AB)=\tau(A)B+A\tau(B)$ for all $A, B\in
\mathcal{A}$. Recall that a linear
mapping $g: \mathcal{A}\rightarrow \mathcal{A}$ is called a
\textit{generalized derivation} if there exists a derivation
$\tau$ such that
$$
g(AB)=g(A)B+A\tau(B)
$$
for all $A, B\in\mathcal{A}$. A linear mapping $L:
\mathcal{A}\longrightarrow \mathcal{A}$ is called a \textit{Lie
derivation} if $L([A, B])=[L(A), B]+[A, L(B)]$ for all $A, B\in
\mathcal{A}$, where $[A, B] = AB-BA$ is the usual Lie product. Clearly, every derivation on $\mathcal{A}$ is a Lie
derivation. But, the converse statement is in general not true. More recently, there have been a number of papers on the study of conditions
under which mappings such as derivations and Lie derivations of noncommutative algebras or operator algebras can be completely determined by the action on some subsets of the given algebras(see
\cite{Bresar, ChebotarKeLeeWong, JiQi, LuJing, LiPanShen, Qi, QiCuiHou, QiHou1, QiHou2, QiHou3, Zhou, ZhuXiongZhang} and the references therein). Of these, the case of Lie-type mappings are very interesting and important.
Let $X$ be a Banach space over the real or complex field $\Bbb{F}$ with ${\rm dim}X\geq3$ and $\mathcal{B}(X)$ be the algebra of all bounded linear operators on $X$. Lu and Jing \cite{LuJing} gave a characterization of Lie derivations on $\mathcal{B}(X)$ by acting on zero products. Let $L: \mathcal{B}(X)\rightarrow \mathcal{B}(X)$ be a linear mapping satisfying $L([A,B]) = [L(A),B]+[A,L(B)]$ for any $A, B \in \mathcal{B}(X)$ with $AB = 0$ (resp. $AB = P$, here $P$ is a fixed nontrivial idempotent). Then $L=d+\tau$ ,where $d$ is a derivation of $\mathcal{B}(X)$ and $\tau : \mathcal{B}(X)\rightarrow \Bbb{F}I$ is a linear mapping vanishing
at commutators $[A,B]$ with $AB = 0$ (resp. $AB = P$). The related problem has also been investigated for prime rings and triangular algebras in \cite{JiQi, QiHou3}, for $\mathcal{J}$-subspace
lattice algebras in \cite{Qi, QiHou1}, respectively.
However, people pay much less attention to the structure of Lie-type
higher derivations on algebras by local action. The
objective of this article is to describe the structure of Lie($\xi$-Lie)
higher derivations on $\mathcal{J}$-subspace
lattice algebras by acting on zero products.

Let us first recall some basic facts related to Lie higher
derivations of an associative algebra $\mathcal{A}$. Let $\mathbb{N}$ be
the set of all non-negative integers and
$G=\{L_k\}_{k=0}^\infty$  be a family of linear
mappings of $\mathcal{A}$ such that $L_0=id_{\mathcal{A}}$. $G$ is
called:
\begin{enumerate}
\item[(i)] a \emph{higher derivation} if
$$
L_k(xy)=\sum_{i+j=k}L_i(x)L_j(y)
$$
for all $x, y\in\mathcal{A}$ and for each $k\in\mathbb{N}$;

\item[(ii)] a \emph{Lie higher derivation} if
$$
L_k([x, y])=\sum_{i+j=k}[L_i(x), L_j(y)]
$$
for all $x, y\in\mathcal{A}$ and for each $k\in\mathbb{N}$;

\item [(iii)] a \emph{generalized higher derivation} if there exists a higher derivation $D=\{d_i\}_{i\in \mathbb{N}}$ such that
$$
L_k(xy)=\sum_{i+j=k}L_i(x)d_j(y)
$$
for all $x,y\in \mathcal {A}$ and for each $k\in \mathbb{N}$. Then $D$ is called an \textit{associated higher derivation} of $G$.
\end{enumerate}
Note that $L_1$ is always a Lie derivation if $D=\{L_k\}_{k\in\mathbb{N}}$ is a Lie higher
derivation. Obviously, every
higher derivation is a Lie higher derivation. But the converse
statements are in general not true. Various higher derivations,
which consist of a family of some additive mappings, frequently appear
in commutative and noncommutative contexts( see
\cite{FerreroHaetinger1, FerreroHaetinger2, Han, Han1, Hazewinkel, LiGuo, LiPanShen, QiHou4, WeiXiao, XiaoWei1, XiaoWei2} and so on).
In \cite{Han1} the first author gave a characterization concerning Lie-type higher derivations of associative algebras. These properties
easily enable us to transfer the problems of Lie-type higher derivations into the same problems related
to Lie-type derivations. In this method, the first author describe Lie-type higher derivations on different operator algebras easily. However, it seems difficult to give a unified approach in characterizing Lie-type higher derivations on associative algebras by local action.
So in terms of the problem studied in the current work, namely, describing the form of  the Lie-type
higher mappings on operator algebras by local action, the method mentioned above is not applicable. Hence we pay our attention to Lie($\xi$-Lie) higher derivations on $\mathcal{J}$-subspace
lattice algebras by acting on zero products in this article.

Let $X$ be a Banach space over the real or complex field
$\mathbb{F}$. A family $\mathcal{L}$ of subspaces of $X$ is called a
subspace lattice of $X$ if it contains $\{0\}$ and $X$, and is
closed under the operations closed linear span $\bigvee$ and
intersection $\bigwedge$ in the sense that $\bigvee_{\gamma\in
\Gamma}L_{\gamma}\in \mathcal{L}$ and $\bigwedge_{\gamma\in
\Gamma}L_{\gamma}\in \mathcal{L}$ for every family
$\{L_{\gamma}\colon \gamma\in \Gamma\}$ of elements in
$\mathcal{L}$. For a subspace lattice $\mathcal{L}$ of $X$, the
associated subspace lattice algebra $\mathrm{Alg}\mathcal{L}$ is the
set of operators on $X$ leaving each subspace in
$\mathcal{L}$ invariant. As the above definition, for an arbitrary
subspace $K$ in $\mathcal{L}$ we can establish
$$
K_- =\bigvee \{ L \in \mathcal{L}: K \nsubseteq L \}.
$$
The class of $\mathcal{J}$-subspace lattices was defined by Panaia
in his dissertation \cite{Panaia} and covers atomic Boolean
subspace lattices and pentagon subspace lattices.
$\mathcal{J}$-subspace lattices are a particular sort of
complemented lattice, satisfying certain other criteria. To be
precise, define
$$
\mathcal{J(L)}= \{K \in {\mathcal L}\colon K \neq \{0\}\ \text{and}\
K_-\neq X \}.
$$
Then $\mathcal{L}$ is called a \textit{$\mathcal{J}$-subspace
lattice}(simply, JSL) on $X$, provided all of the following conditions are
satisfied:

(1)$\bigvee \{K\colon K \in \mathcal{J(L)}\}=X$;

(2)$\bigwedge\{K_- \colon K \in \mathcal{J(L)} \}=\{0\}$;

(3)$K \bigvee K_-=X$ for each $K$ in $\mathcal{J(L)}$;

(4)$K\bigwedge K_-=\{0\}$ for each $K$ in $\mathcal{J(L)}$.

\noindent If $\mathcal{L}$ is a $\mathcal{J}$-subspace lattice, the associated
subspace lattice algebra ${\rm Alg}\mathcal{L}$ is called a
\textit{$\mathcal{J}$-subspace lattice algebra}(JSL algebras). The center is defined usually by $Z({\rm Alg}\mathcal{L}) = \{Z \in {\rm Alg}\mathcal{L}: ZA = AZ \ \text{for all} \ A \in {\rm Alg}\mathcal{L}\}$.

The outline of our paper is organized as follows. Let $\mathcal{L}$ be a $\mathcal{J}$-subspace lattice on a Banach space $X$ over the real or complex field $\Bbb{F}$ and $\mathrm{Alg}\mathcal{L}$ be the associated $\mathcal{J}$-subspace lattice algebra. In the
second section we give a characterization of a family $\{L_n\}_{n=0}^\infty$ of linear mappings satisfying
$$
L_n([A, B])=\sum_{i+j=n}[L_i(A), L_j(B)]
$$
for any $A, B \in  {\rm Alg}\mathcal{L}$ with $AB=0$. This permits us to transfer the
problems related to Lie higher derivations by acting on zero products into the same problems of the corresponding Lie derivations. Then this result is applied to describe the Lie higher derivations by acting on zero products on ${\rm Alg}\mathcal{L}$. Basing on the second section, we continue to investigate $\xi$-Lie higher derivations by acting on zero products in the third section, where $1\neq \xi\in \Bbb{F}$.

\section{Characterizations of Lie higher derivations by acting on zero products}
\label{xxsec2}

For any $A \in \mathcal{B}(X)$,
denote by $A^*$ the adjoint of $A$. For $x \in X$ and $f \in X^*$, the rank-one operator
$x \otimes f$ is defined by $(x \otimes f)y=f(y)x$ \ \text{for all}\   $y \in X$. For any non-empty subset $\mathfrak{L} \in X$,
$\mathfrak{L}^{\bot}$
denotes its annihilator, that is, $\mathfrak{L}^{\bot} = \{f \in X^* : f(x) = 0 \ \text{for all} \ x \in \mathfrak{L}\}$.
Now Let us first give some lemmas related to rank-one operator, which is crucial to what follows.

\begin{lemma} {\rm (}\cite{Longstaff}{\rm )}\label{xxsec2.l1}. Let $\mathcal{L}$ be a $\mathcal{J}$-subspace lattice on a Banach space $X$. Then
$x\otimes f\in \mathrm{Alg}\mathcal{L}$ if and only if there exists a subspace $K\in \mathcal{J}(\mathcal{L})$ such that $x \in K$
and $f \in K_-^{\bot} $.
\end{lemma}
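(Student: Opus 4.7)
The plan is to prove the two directions separately, since one is essentially bookkeeping and the other requires constructing the witnessing element of $\mathcal{J}(\mathcal{L})$.

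For the easier direction ($\Leftarrow$), I would fix $K \in \mathcal{J}(\mathcal{L})$ with $x \in K$ and $f \in K_-^\perp$, and check that $(x \otimes f)L \subseteq L$ for every $L \in \mathcal{L}$. The key observation is a dichotomy that follows straight from the definition $K_- = \bigvee\{L' \in \mathcal{L}: K \nsubseteq L'\}$: for any $L \in \mathcal{L}$, either $K \subseteq L$, or $L$ is one of the pieces in the join defining $K_-$ and hence $L \subseteq K_-$. In the first case, $(x\otimes f)L \subseteq \mathbb{F} x \subseteq K \subseteq L$; in the second, $f$ vanishes on $L$, so $(x\otimes f)L = \{0\}$. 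Either way, $L$ is invariant.

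For the harder direction ($\Rightarrow$), suppose $x \otimes f \in \mathrm{Alg}\mathcal{L}$; I can assume both $x \neq 0$ and $f \neq 0$, since the statement is trivial otherwise (any $K \in \mathcal{J}(\mathcal{L})$ works). The hypothesis translates into: for each $L \in \mathcal{L}$, either $x \in L$ or $f|_L = 0$. My plan is to build $K$ as the smallest subspace of $\mathcal{L}$ containing $x$, namely
\[
K := \bigwedge \{ L \in \mathcal{L} : x \in L\},
\]
which lies in $\mathcal{L}$ by closure under $\bigwedge$. Clearly $x \in K$ and $K \neq \{0\}$.

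The main technical step is then to show $K_- \neq X$ and $f \in K_-^\perp$ simultaneously, by comparing $K_-$ to $L_f := \bigvee\{L \in \mathcal{L} : f|_L = 0\}$. I would argue that $L_f$ is a proper closed subspace (any $y$ with $f(y) \neq 0$ escapes it, using continuity of $f$ when forming the closed span), and that $K_- \subseteq L_f$: indeed, if $K \nsubseteq L$ then $x \notin L$ by minimality of $K$, hence $f|_L = 0$ by hypothesis, so $L \subseteq L_f$. Taking the join over all such $L$ yields $K_- \subseteq L_f \subsetneq X$, giving both $K \in \mathcal{J}(\mathcal{L})$ and $f \in L_f^\perp \subseteq K_-^\perp$. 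The only subtle point I anticipate is handling the closed-span/continuity detail when asserting that $L_f$ cannot equal $X$; everything else is lattice-theoretic bookkeeping built on the four JSL axioms.
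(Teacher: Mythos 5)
The paper does not actually prove this lemma: it is quoted from Longstaff's 1975 paper on strongly reflexive lattices, so there is no internal proof to compare against. Your argument is a correct, self-contained proof of the statement in its intended scope, and it follows the standard route: the backward direction via the dichotomy ``either $K\subseteq L$, or $L$ occurs in the join defining $K_-$,'' and the forward direction by taking $K$ to be the meet of all elements of $\mathcal{L}$ containing $x$ and comparing $K_-$ with $L_f=\bigvee\{L\in\mathcal{L}: f|_L=0\}$, with continuity of $f$ supplying $L_f\neq X$. Your construction even lands $K$ in $\mathcal{J}(\mathcal{L})$ directly (since $K\ni x\neq 0$ and $K_-\subseteq L_f\subsetneq X$), which is exactly the extra content of the JSL version over the version for general subspace lattices.

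One correction: your dismissal of the degenerate case is wrong as stated. If $x=0$ but $f\neq 0$, it is not true that ``any $K\in\mathcal{J}(\mathcal{L})$ works'': you would still need $f\in K_-^{\bot}$ for some $K$, and this can fail outright. For instance, in the atomic Boolean subspace lattice on $\mathbb{F}^3$ with atoms spanned by $e_1,e_2,e_3$, one has $\mathcal{J}(\mathcal{L})=\{K_1,K_2,K_3\}$ with $(K_i)_-$ the span of the other two basis vectors, so the functional $f=e_1^*+e_2^*+e_3^*$ annihilates no $(K_i)_-$; yet $0\otimes f=0\in\mathrm{Alg}\mathcal{L}$. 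The lemma is really a statement about rank-one operators, so $x\neq 0$ and $f\neq 0$ must be read as part of the hypothesis rather than disposed of as a trivial case. With that proviso, your proof is complete.
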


\begin{lemma} {\rm (}\cite{LongstaffPanaia}{\rm )}\label{xxsec2.l2}. Let $\mathcal{L}$ be a $\mathcal{J}$-subspace lattice on a Banach space $X$ and
$K\in \mathcal{J}(\mathcal{L})$. Then, for any nonzero vector $x \in K$
, there exists $f \in K_-^{\bot} $
such
that $f(x) =1$; dually, for any nonzero functional  $f \in K_-^{\bot} $
, there exists $x \in K$
such that $f(x) =1$.
\end{lemma}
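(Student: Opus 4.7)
The plan is to derive both halves of the lemma directly from the defining axioms (3) and (4) of a $\mathcal{J}$-subspace lattice together with the Hahn-Banach theorem. The key point is that both $K$ and $K_-$ are closed subspaces of $X$ (since they are lattice elements, hence closed by definition of a subspace lattice), and that axioms (3) and (4) tell us $K \vee K_- = X$ and $K \wedge K_- = \{0\}$.

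For the first assertion, given a nonzero $x \in K$, I would first invoke axiom (4) to conclude that $x \notin K_-$, because $K \wedge K_- = \{0\}$. Since $K_-$ is a closed subspace of the Banach space $X$ not containing $x$, the Hahn-Banach separation theorem furnishes a bounded linear functional $g \in X^*$ with $g|_{K_-} = 0$ and $g(x) \neq 0$. The condition $g|_{K_-}=0$ is precisely $g \in K_-^{\bot}$, and rescaling $f := g/g(x)$ gives the required $f \in K_-^{\bot}$ with $f(x) = 1$.

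For the dual assertion, given a nonzero $f \in K_-^{\bot}$, the goal is to find $x \in K$ with $f(x) = 1$. I would argue by contradiction: suppose $f|_K \equiv 0$. Combined with $f|_{K_-} = 0$, linearity gives $f \equiv 0$ on the algebraic sum $K + K_-$, and continuity of $f$ then forces $f \equiv 0$ on the closure, which by axiom (3) equals $K \vee K_- = X$. This contradicts $f \neq 0$, so $f|_K \not\equiv 0$, meaning there exists $x' \in K$ with $f(x') \neq 0$. Setting $x := x'/f(x') \in K$ yields $f(x) = 1$.

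Neither step is technically difficult; the only place where one must be a little careful is the use of continuity of $f$ in the dual part, since the lattice join $\vee$ is the \emph{closed} linear span rather than just the algebraic sum. But boundedness of $f \in X^*$ makes this automatic, so I do not expect any real obstacle. The essential content of the lemma is simply that axioms (3) and (4), which say $K$ and $K_-$ form a pair of complementary closed subspaces summing densely to $X$, suffice for a Hahn-Banach style duality between $K$ and $K_-^{\bot}$.
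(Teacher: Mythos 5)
The paper does not prove this lemma at all: it is quoted verbatim from Longstaff--Panaia as a known fact, so there is no in-paper argument to compare against. Your self-contained derivation is correct. For the first half, axiom (4) ($K\wedge K_-=\{0\}$) indeed gives $x\notin K_-$ for nonzero $x\in K$, and since $K_-$ is a closed subspace (elements of a subspace lattice are closed subspaces in the standard convention, which is implicitly used throughout the paper, e.g.\ in forming $K_-^{\bot}$), the Hahn--Banach construction of a functional annihilating $K_-$ but not $x$ goes through because $d(x,K_-)>0$. For the second half, axiom (3) together with the observation that the closed linear span $K\vee K_-$ is the closure of the algebraic sum $K+K_-$, plus continuity of $f$, correctly forces $f|_K\not\equiv 0$ whenever $0\neq f\in K_-^{\bot}$. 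The only point you should make fully explicit is the closedness of $K$ and $K_-$, which you flag but attribute to ``the definition''; the paper's stated definition says only ``subspaces,'' so strictly speaking you are importing the standard convention from the cited literature. With that caveat your argument is a complete and elementary proof of the cited result.
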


\begin{lemma} {\rm (}\cite{Qi}{\rm )}\label{xxsec2.l3}. Every rank one operator  $x\otimes f\in \mathrm{Alg}\mathcal{L}$ is a linear combination
of idempotents in $\mathrm{Alg}\mathcal{L}$.
\end{lemma}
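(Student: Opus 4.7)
The plan is to use the rank-one characterization in Lemma \ref{xxsec2.l1} to locate a subspace $K\in\mathcal{J}(\mathcal{L})$ adapted to $x\otimes f$ and then split into two cases according to whether $f(x)$ vanishes. By hypothesis $x\otimes f\in\mathrm{Alg}\mathcal{L}$, so Lemma \ref{xxsec2.l1} supplies $K\in\mathcal{J}(\mathcal{L})$ with $x\in K$ and $f\in K_-^{\bot}$; in particular any other vector in $K$ or functional in $K_-^{\bot}$ we produce will again give a rank-one operator lying in $\mathrm{Alg}\mathcal{L}$.

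First I would dispose of the easy case $f(x)\ne 0$. A direct computation shows $(x\otimes f)(x\otimes f)=f(x)\,x\otimes f$, so $\frac{1}{f(x)}(x\otimes f)$ is an idempotent, which of course belongs to $\mathrm{Alg}\mathcal{L}$ since $x\otimes f$ does. Hence $x\otimes f=f(x)\cdot e$ is already a scalar multiple of a single idempotent.

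The substantive case is $f(x)=0$, and this is where I would use Lemma \ref{xxsec2.l2}: since $f$ is a nonzero element of $K_-^{\bot}$, there exists $y\in K$ with $f(y)=1$. Because $y\in K$ and $x+y\in K$, Lemma \ref{xxsec2.l1} ensures $y\otimes f$ and $(x+y)\otimes f$ both lie in $\mathrm{Alg}\mathcal{L}$; since $f(y)=1$ and $f(x+y)=0+1=1$, the same computation as above shows both are genuine idempotents in $\mathrm{Alg}\mathcal{L}$. Subtracting, $(x+y)\otimes f-y\otimes f=x\otimes f$ exhibits $x\otimes f$ as a linear combination of two idempotents in $\mathrm{Alg}\mathcal{L}$, completing the proof.

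There is no real obstacle here: the only mild subtlety is making sure the auxiliary operators we write down actually belong to $\mathrm{Alg}\mathcal{L}$, and this is exactly what Lemma \ref{xxsec2.l1} guarantees once we confine the perturbation $y$ to the same subspace $K$. The construction is manifestly linear in $x\otimes f$, and it uses only two idempotents, which is optimal in the degenerate case $f(x)=0$.
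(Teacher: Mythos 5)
Your argument is correct and is essentially the standard proof of this fact (the paper itself gives no proof, citing \cite{Qi}, where the same two-case argument via Lemmas \ref{xxsec2.l1} and \ref{xxsec2.l2} is used): the identity $(x\otimes f)^2=f(x)\,x\otimes f$ handles $f(x)\neq 0$, and writing $x\otimes f=(x+y)\otimes f-y\otimes f$ with $f(y)=1$ handles $f(x)=0$. Nothing further is needed.
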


 Moreover, finite-rank operators will also be used in later. Given a subspace lattice  $\mathcal{L}$, by $\mathcal{F}_{\mathcal{L}}(K)$ we denote the subspace spanned by all rank one operators $x \otimes f$ with
$x \in K$ and $f \in K_-^{\bot}$ for arbitrary $K\in \mathcal{J}(\mathcal{L})$.
$\mathcal{F}(\mathcal{L})$ denotes the algebra of all finite rank operators in
$\mathrm{Alg}\mathcal{L}$.

\begin{lemma}{\rm (}\cite{LuLi} or \cite{Panaia}{\rm )}\label{xxsec2.l4}. Let $\mathcal{L}$ be a $\mathcal{J}$-subspace lattice on a Banach space $X$. Suppose that $A$ is an operator of rank $n$
in $\mathcal{F}(\mathcal{L})$. Then $A$ can be written as a sum of n rank-1 operators in $ \mathrm{Alg}\mathcal{L}$.
\end{lemma}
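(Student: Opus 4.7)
The plan is to proceed by induction on the rank $n$ of $A$. The base case $n=1$ follows immediately from Lemma \ref{xxsec2.l1}: every rank-one operator in $\mathrm{Alg}\mathcal{L}$ is already of the form $x\otimes f$ with $x\in K$ and $f\in K_-^\perp$ for some $K\in\mathcal{J}(\mathcal{L})$, so $A$ itself is the desired sum of one rank-one summand.

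For the inductive step, assume the statement for operators of rank less than $n$, and let $A\in\mathcal{F}(\mathcal{L})$ have rank $n\geq 2$. Since $A\neq 0$ and $\bigvee\{K\colon K\in\mathcal{J}(\mathcal{L})\}=X$, I can choose $K\in\mathcal{J}(\mathcal{L})$ and $y\in K$ with $Ay\neq 0$. Because $A$ leaves $K$ invariant, $Ay\in K$. The strategy is to peel off a rank-one summand $B\in\mathrm{Alg}\mathcal{L}$ such that $\mathrm{rank}(A-B)=n-1$, after which the inductive hypothesis applied to $A-B$ finishes the proof.

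To construct $B$, I first claim $y\notin \overline{K_-+\ker A}$. Indeed, suppose $y=\lim_n(k_n+v_n)$ with $k_n\in K_-$ and $v_n\in\ker A$. Continuity of $A$ gives $Ay=\lim_n Ak_n \in \overline{A(K_-)}\subseteq K_-$, using $A(K_-)\subseteq K_-$ and the fact that $K_-$ is closed. Combined with $Ay\in K$, the JSL axiom $K\wedge K_-=\{0\}$ forces $Ay=0$, a contradiction. By Hahn--Banach, there is $f\in X^*$ vanishing on $K_-+\ker A$ with $f(y)=1$; in particular $f\in K_-^\perp$ and $f|_{\ker A}=0$. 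Setting $B=Ay\otimes f$, Lemma \ref{xxsec2.l1} immediately gives $B\in\mathrm{Alg}\mathcal{L}$, since $Ay\in K$ and $f\in K_-^\perp$.

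A short calculation then identifies $\ker(A-B)=\ker A+\mathbb{F}y$: the inclusion $\supseteq$ uses $f|_{\ker A}=0$ and $f(y)=1$, while the reverse comes from rewriting $(A-B)x=0$ as $A(x-f(x)y)=0$. Since $Ay\neq 0$ we have $y\notin\ker A$, so this kernel has codimension $n-1$, giving $\mathrm{rank}(A-B)=n-1$. Induction decomposes $A-B$ as a sum of $n-1$ rank-one operators in $\mathrm{Alg}\mathcal{L}$, which together with $B$ produces the required decomposition of $A$. The principal obstacle is precisely this rank-drop step: a generic rank-one correction need not decrease the rank, and it is the simultaneous requirements $f(y)=1$ and $f|_{\ker A}=0$ that do the work. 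The availability of such an $f$ is exactly where the JSL axioms are used, with $K\wedge K_-=\{0\}$ being indispensable.
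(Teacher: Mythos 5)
The paper itself offers no proof of this lemma; it is quoted verbatim from \cite{LuLi} and \cite{Panaia}, so there is nothing internal to compare against. Your argument, read on its own, is correct and self-contained. The only points worth checking carefully all go through: the existence of $K\in\mathcal{J}(\mathcal{L})$ and $y\in K$ with $Ay\neq 0$ follows from $\bigvee\{K\colon K\in\mathcal{J}(\mathcal{L})\}=X$ and the boundedness of $A$; the separation step is sound because $K_-\in\mathcal{L}$ is closed and $A$-invariant, so $y\in\overline{K_-+\ker A}$ would force $Ay\in K\cap K_-=\{0\}$; Hahn--Banach then supplies $f\in K_-^{\bot}$ with $f(y)=1$ and $f|_{\ker A}=0$, and Lemma \ref{xxsec2.l1} puts $B=Ay\otimes f$ in $\mathrm{Alg}\mathcal{L}$. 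The identity $(A-B)x=A\bigl(x-f(x)y\bigr)$ gives $\ker(A-B)=\ker A+\mathbb{F}y$, so the codimension (equivalently the rank) drops by exactly one, and the induction closes. What your route buys, compared with simply citing the references, is an explicit exhibition of where the JSL axioms enter: the invariance of $K$ and $K_-$ together with $K\bigwedge K_-=\{0\}$ is precisely what guarantees a functional that simultaneously annihilates $K_-$ and $\ker A$ while not annihilating $y$, which is the rank-reduction mechanism. The only cosmetic caveat is that your decomposition may use a different $K\in\mathcal{J}(\mathcal{L})$ at each step of the peeling, but the lemma as stated only asks for rank-one operators in $\mathrm{Alg}\mathcal{L}$, not ones attached to a common $K$, so this is harmless.
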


Let $\{L_n\}_{n=0}^{\infty}: \mathrm{Alg}\mathcal{L}\rightarrow \mathrm{Alg}\mathcal{L}$ be a family of linear mappings such that
$$
L_n([A, B])=\sum_{i+j=n}[L_i(A), L_j(B)]
$$ for any $A, B\in\mathrm{Alg}\mathcal{L}$ with $AB = 0$. The following result shows that the restriction of $\{L_n\}_{n=0}^{\infty}$  to $\mathcal{F}(\mathcal{L})$ is actually a Lie higher derivation.

\begin{lemma}\label{xxsec2.l5}
Let $\mathcal{L}$ be a $\mathcal{J}$-subspace lattice on a Banach space $X$ over the
real or complex field $\mathbb{F}$ and $ \mathrm{Alg}\mathcal{L}$ be the associated $\mathcal{J}$-subspace
lattice algebra. Suppose that $\{L_n\}_{n=0}^{\infty}: \mathrm{Alg}\mathcal{L}\rightarrow \mathrm{Alg}\mathcal{L}$ is a family of linear mappings such that
$$
L_n([A, B])=\sum_{i+j=n}[L_i(A), L_j(B)]
$$ for any $A, B\in\mathrm{Alg}\mathcal{L}$ with $AB = 0$. Then  $$
L_n([A,F])=[L_n(A),F]+\sum_{\substack{i+j=n \\ 0<i,j<n}}[L_i(A), L_j(F)]+[A, L_n(F)]
$$
for all $A \in \mathrm{Alg}\mathcal{L}$ and  $F\in \mathcal{F}(\mathcal{L})$.
\end{lemma}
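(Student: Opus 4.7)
The plan is to reduce the statement to the case where $F=P$ is a rank-one idempotent in $\mathrm{Alg}\mathcal{L}$, and then exploit a Peirce decomposition of $A$ with respect to $P$. Since $L_0=\mathrm{id}$, the asserted right-hand side rewrites as $\sum_{i+j=n}[L_i(A),L_j(F)]$, and that is the form I will establish.

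By Lemma~\ref{xxsec2.l4} and linearity of $L_n$ and of the bracket, I may assume $F=x\otimes f$ is a single rank-one operator in $\mathrm{Alg}\mathcal{L}$, with $x\in K$ and $f\in K_-^{\bot}$ for some $K\in\mathcal{J}(\mathcal{L})$. Choosing $g\in K_-^{\bot}$ with $g(x)=1$ via Lemma~\ref{xxsec2.l2}, a direct computation (distinguishing whether $f(x)\neq 0$ or $f(x)=0$, and noting that $x\otimes h$ is a rank-one idempotent in $\mathrm{Alg}\mathcal{L}$ whenever $h\in K_-^{\bot}$ satisfies $h(x)=1$) shows that $x\otimes f$ is a linear combination of such rank-one idempotents. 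Thus I may further assume $F=P=x\otimes g\in\mathrm{Alg}\mathcal{L}$ is a rank-one idempotent with $g(x)=1$.

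Set $Q=I-P\in\mathrm{Alg}\mathcal{L}$ and Peirce-decompose $A=A_1+A_2+A_3+A_4$ with $A_1=PAP$, $A_2=PAQ$, $A_3=QAP$, $A_4=QAQ$; each $A_k$ lies in $\mathrm{Alg}\mathcal{L}$. The crux of the argument is that because $P=x\otimes g$ has rank one, a direct calculation gives $A_1=PAP=g(Ax)\,P=\lambda P$ for the scalar $\lambda=g(Ax)\in\mathbb{F}$. I then apply the zero-product hypothesis to the pairs $(A_2,P)$, $(P,A_3)$, and $(A_4,P)$, each of which has zero product, to obtain
\[
L_n([A_2,P])=\sum_{i+j=n}[L_i(A_2),L_j(P)], \quad L_n([A_3,P])=\sum_{i+j=n}[L_i(A_3),L_j(P)]
\]
(the second after relabelling $i\leftrightarrow j$ and using antisymmetry of the bracket), while $[A_4,P]=0$ forces $\sum_{i+j=n}[L_i(A_4),L_j(P)]=0$. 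The remaining term contributes $\sum_{i+j=n}[L_i(A_1),L_j(P)]=\lambda\sum_{i+j=n}[L_i(P),L_j(P)]=0$, the inner sum vanishing because swapping $i\leftrightarrow j$ turns it into its negative. Summing the four Peirce contributions by linearity produces $L_n([A,P])=\sum_{i+j=n}[L_i(A),L_j(P)]$, which is the required identity.

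The main obstacle is the diagonal Peirce part $A_1=PAP$: for a general idempotent $P$, $A_1$ need not be finite rank nor structurally related to $P$, and the zero-product hypothesis does not directly control $\sum[L_i(A_1),L_j(P)]$. The trick is to first reduce to a \emph{rank-one} idempotent $P$, which collapses $A_1$ into a scalar multiple of $P$; the offending term is then killed by the antisymmetry of the Lie bracket. The other three Peirce pieces slot directly into the hypothesis.
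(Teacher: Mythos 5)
Your proof is correct, but it takes a genuinely different route from the paper's. The paper works with an \emph{arbitrary} idempotent $P\in\mathrm{Alg}\mathcal{L}$: it pairs the two zero products $AP(\lambda I-\lambda P)=0$ and $(A-AP)\lambda P=0$, subtracts the resulting identities, and for this needs a preliminary induction on $n$ (its Claim 1) showing $L_n(\mathbb{F}I)\subseteq Z(\mathrm{Alg}\mathcal{L})$ so that the brackets against $L_j(\lambda I)$ disappear; it then invokes Lemma \ref{xxsec2.l3} to pass from idempotents to rank-one operators. You instead sharpen the reduction to \emph{rank-one} idempotents $P=x\otimes g$ (your inline observation that $x\otimes f$ is a linear combination of rank-one idempotents $x\otimes h$ with $h\in K_-^{\bot}$, $h(x)=1$, is a correct strengthening of Lemma \ref{xxsec2.l3} obtained from Lemmas \ref{xxsec2.l1} and \ref{xxsec2.l2}), and then Peirce-decompose $A$ rather than manipulate two products of $A$ against $P$. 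The payoff is that $PAP=g(Ax)P$ is a scalar multiple of $P$, so the one corner not reached by the zero-product hypothesis is killed by antisymmetry of the bracket, and you never need the centrality of $L_n(\mathbb{F}I)$ nor any induction on $n$. The trade-off is that your argument is tied to the availability of rank-one idempotents, whereas the paper's computation with general idempotents is the more portable one. The individual steps all check out: $A_2P=PA(QP)=0$, $PA_3=(PQ)AP=0$, $A_4P=QA(QP)=0$; the fourth corner is handled by $[A_4,P]=0$ together with $L_n(0)=0$; and $\sum_{i+j=n}[L_i(P),L_j(P)]=0$ because the involution $(i,j)\mapsto(j,i)$ negates each summand and the field has characteristic zero.
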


\begin{proof} The lemma will be proved through two claims.

\textbf{Claim 1.} $L_{n}(\mathbb{F}I) \subseteq Z(\mathrm{Alg}\mathcal{L})$.

Let us show this claim by induction on the index $n$. When $n=1$, $L_1$ satisfies
$L_1([A,B]) = [L_1(A),B] + [A,L_1(B)]$ for any $A,B \in \mathrm{Alg}\mathcal{L}$ with $AB = 0$, and $L_1(\mathbb{F}I) \subseteq Z(\mathrm{Alg}\mathcal{L})$ (see \cite{Qi}).
Now let $s\in  \mathbb{N}$ with $s\geq 1$ and we assume that the conclusion holds for all $s\leq n$.

For any scalar $\lambda$ and any idempotent $P \in \mathrm{Alg}\mathcal{L}$, in view of the fact $\lambda P(I - P) = 0$, we have
$$
\begin{aligned}
 L_{n+1} ([\lambda P,I-P])&=[L_{n+1} (\lambda P),I-P]+[\lambda P,L_{n+1} (I-P)]
 \\ \ \ &+\sum_{\substack{i+j={n+1}  \\ 0<i,j<{n+1} }}[L_i(\lambda P), L_j(I-P)]\\
 &=PL_{n+1} (\lambda P) - L_{n+1} ( \lambda P)P + \lambda PL_{n+1} (I) - \lambda PL_{n+1} (P)\\
 &\ \ - \lambda L_{n+1} (I)P+ \lambda L_{n+1} (P)P+ \sum_{\substack{i+j={n+1}  \\ 0<i,j<{n+1} }}[L_i(\lambda P), L_j(I-P)].
\end{aligned}
$$
On the other hand, by $(\lambda I - \lambda P)P = 0$ one can assert
$$
\begin{aligned}
 L_{n+1} ([\lambda I - \lambda P,P])&=[L_{n+1} (\lambda I - \lambda P),P]+[\lambda I - \lambda P,L_{n+1} (P)]\\
 & \ \ +\sum_{\substack{i+j={n+1}  \\ 0<i,j<{n+1} }}[L_i(\lambda I - \lambda P), L_j(P)]\\
 &=L_{n+1} (\lambda I)P - L_{n+1} (\lambda P)P - PL_{n+1} (\lambda I) + PL_{n+1} (\lambda P)\\
 &- \lambda PL_{n+1} (P) + \lambda L_{n+1} (P)P+\sum_{\substack{i+j={n+1}  \\ 0<i,j<{n+1} }}[L_i(\lambda I - \lambda P), L_j(P)].
\end{aligned}
$$
Comparing the above two equations, we get from the induction hypothesis that
\begin{equation}\label{xxsec2e1}
\lambda PL_{n+1}(I) - \lambda L_{n+1}(I)P = L_{n+1}(\lambda I)P - PL_{n+1}(\lambda I).
\end{equation}
A similar discussion as in \cite{Qi} shows that $L_{n+1}(\mathbb{F}I) \subseteq Z(\mathrm{Alg}\mathcal{L})$.

\textbf{Claim 2.}
For any $A \in \mathrm{Alg}\mathcal{L}$ and any rank one operator $x\otimes f\in \mathrm{Alg}\mathcal{L}$, we have
$$
L_n([A,x\otimes f])=[L_n(A),x\otimes f]+\sum_{\substack{i+j=n \\ 0<i,j<n}}[L_i(A), L_j(x\otimes f)]+[A, L_n(x\otimes f)].
$$

Take any $A \in \mathrm{Alg}\mathcal{L}$ and any idempotent $P \in \mathrm{Alg}\mathcal{L}$. For any scalar $\lambda$,
notice that $AP(\lambda I - \lambda P) = 0$. By Claim 1 it follows that
$$
\begin{aligned}
& L_n(\lambda PAP)-L_n(\lambda AP)=L_n([AP, \lambda I-\lambda P])\\
 &=[L_n(AP),\lambda I-\lambda P]+\sum_{\substack{i+j=n \\ 0<i,j<n}}[L_i(AP), L_j(\lambda I-\lambda P)]+[AP, L_n(\lambda I-\lambda P)]\\
 &=\lambda PL_n(AP) - \lambda L_n(AP)P - APL_n( \lambda P) + L_n(\lambda P)AP\\
 &+\sum_{\substack{i+j=n \\ 0<i,j<n}}[L_i(AP), L_j(-\lambda P)]
\end{aligned}
$$
However, since $(A -AP)\lambda P = 0$, we obtain
$$
\begin{aligned}
& L_n(\lambda PAP)-L_n(\lambda PA)=L_n([A-AP, \lambda P])\\
 &=[L_n(A-AP), \lambda P]+\sum_{\substack{i+j=n \\ 0<i,j<n}}[L_i(A-AP), L_j( \lambda P)]+[A-AP, L_n( \lambda P)]\\
 &=\lambda L_n(A)P -  \lambda PL_n(A) + \lambda PL_n(AP) - \lambda L_n(AP)P\\
 &+AL_n(\lambda P) - APL_n(\lambda P) - L_n(\lambda P)A + L_n(\lambda P)AP\\
 &+\sum_{\substack{i+j=n \\ 0<i,j<n}}[L_i(A-AP), L_j(\lambda P)]
\end{aligned}
$$
 Comparing the last two relations leads to
\begin{equation}\label{xxsec2e2}
L_n([A,\lambda P])=[L_n(A),\lambda P]+\sum_{\substack{i+j=n \\ 0<i,j<n}}[L_i(A), L_j(\lambda P)]+[A,  L_n(\lambda P)].
\end{equation}
for all $A \in \mathrm{Alg}\mathcal{L}$. Now, by Lemma \ref{xxsec2.l3},
the claim is true. Furthermore, taking into account Lemma \ref{xxsec2.l4}, we can further get
$$
L_n([A,F])=[L_n(A),F]+\sum_{\substack{i+j=n \\ 0<i,j<n}}[L_i(A), L_j(F)]+[A, L_n(F)]
$$
for all $A \in \mathrm{Alg}\mathcal{L}$ and  $F\in \mathcal{F}(\mathcal{L})$.
\end{proof}

We must indicate that the proofs in the following lemma is essentially the same as those in \cite{Lu}, but it is done in a slightly different way.

\begin{lemma}\label{xxsec2.l7}
Let $\mathcal{L}$ be a $\mathcal{J}$-subspace lattice on a Banach space $X$ over the real or complex field $\Bbb{F}$ and $\mathrm{Alg}\mathcal{L}$ be the associated $\mathcal{J}$-subspace lattice algebra. Suppose that $\delta: \mathrm{Alg}\mathcal{L}\rightarrow \mathrm{Alg}\mathcal{L}$ is a family of linear mappings such that
 $\delta([A, F])=[\delta(A), F]+[A, \delta(F)]$ for all $A\in \mathrm{Alg}\mathcal{L}$, $F\in \mathcal{F}(\mathcal{L})$ and $n\in \mathbb{N}$.
Then for any $K \in \mathcal{J}(\mathcal{L})$, there is an operator
$S$ in $\mathcal{F}_{\mathcal{L}}(K)$ and an operator $\tau(P)$ in $Z( \mathrm{Alg}\mathcal{L})$ such that $ \delta(x\otimes f) = [x\otimes f, S]+\tau(x\otimes f)$ for all $x \in K$ and $f\in K_-^{\bot}$.
\end{lemma}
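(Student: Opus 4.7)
The plan is to adapt the Lu \cite{Lu} strategy for Lie derivations on JSL algebras, making the adjustment that the hypothesis here is only a \emph{partial} Lie derivation identity, valid when the second argument lies in $\mathcal{F}(\mathcal{L})$. Fix $K\in\mathcal{J}(\mathcal{L})$ and, via Lemma \ref{xxsec2.l2}, choose $x_0\in K$ and $f_0\in K_-^\perp$ with $f_0(x_0)=1$. By Lemma \ref{xxsec2.l1}, $P:=x_0\otimes f_0\in\mathcal{F}_{\mathcal{L}}(K)$ is a rank-one idempotent; set $Q:=I-P$. The overall goal is to construct $S\in\mathcal{F}_{\mathcal{L}}(K)$ explicitly from the Peirce blocks of $\delta(P)$, and then prove that
\[
\tau(x\otimes f):=\delta(x\otimes f)-[x\otimes f,S]
\]
lies in $Z(\mathrm{Alg}\mathcal{L})$ for every $x\in K$, $f\in K_-^\perp$.

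Decompose $\delta(P)=P\delta(P)P+P\delta(P)Q+Q\delta(P)P+Q\delta(P)Q$. Because $\delta(P)\in\mathrm{Alg}\mathcal{L}$ leaves both $K$ and $K_-$ invariant, one sees that $P\delta(P)=x_0\otimes(\delta(P)^*f_0)$ and $\delta(P)P=(\delta(P)x_0)\otimes f_0$ both belong to $\mathcal{F}_{\mathcal{L}}(K)$ and each has rank at most one. The first substantive step is to feed the identity $\delta([P,F])=[\delta(P),F]+[P,\delta(F)]$ with well-chosen rank-one $F$ of the three shapes $x_0\otimes g$ (with $g\in K_-^\perp$), $y\otimes f_0$ (with $y\in K$), and $y\otimes g$; this extracts algebraic relations among the Peirce blocks of $\delta(P)$ and, simultaneously, pins down the Peirce blocks of $\delta(y\otimes g)$ up to a scalar ambiguity. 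A natural candidate is then
\[
S:=P\delta(P)Q-Q\delta(P)P\in\mathcal{F}_{\mathcal{L}}(K),
\]
or a minor variant thereof, chosen precisely so that $[P,S]$ absorbs the off-diagonal blocks of $\delta(P)$.

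With $S$ in hand, verify for every rank-one $x\otimes f\in\mathcal{F}_{\mathcal{L}}(K)$ that $\tau(x\otimes f)$ commutes with every rank-one operator in $\mathrm{Alg}\mathcal{L}$. The computation is carried out by applying the hypothesis to commutators of the form $[x\otimes f,P]$ and $[x\otimes f,y\otimes g]$ for varying $y\in K'$, $g\in (K')_-^\perp$ and $K'\in\mathcal{J}(\mathcal{L})$, and then comparing Peirce blocks with the identities derived in the previous step. Once $\tau(x\otimes f)$ is shown to centralize every rank-one operator in $\mathrm{Alg}\mathcal{L}$, it centralizes all of $\mathcal{F}(\mathcal{L})$ by Lemma \ref{xxsec2.l4}; invoking the standard fact that an element commuting with $\mathcal{F}(\mathcal{L})$ inside a JSL algebra is central places $\tau(x\otimes f)$ in $Z(\mathrm{Alg}\mathcal{L})$, as required.

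The main obstacle is twofold. First, one must make the correct explicit choice of $S$ so that it \emph{genuinely} belongs to $\mathcal{F}_{\mathcal{L}}(K)$ (which is why $S$ is built from rank-one Peirce pieces of $\delta(P)$ rather than from $\delta(P)$ itself). Second, the verification that $\tau(x\otimes f)$ is central requires a delicate case analysis according to whether $f(x_0)=0$ or not and whether $f_0(x)=0$ or not; each case produces a slightly different commutator to test against the hypothesis, and the results must be glued together by linearity. Because the corresponding Lie-derivation argument in \cite{Lu} is essentially identical modulo replacing the global Lie identity by the partial one used here, the creative input is mostly the choice of $S$ and the bookkeeping, rather than any new idea.
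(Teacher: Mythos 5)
Your outline follows the same skeleton as the paper's proof (which in turn follows Lu): take an idempotent $P\in\mathcal{F}_{\mathcal{L}}(K)$, Peirce-decompose $\delta(P)$, set $S=P\delta(P)(I-P)-(I-P)\delta(P)P$ so that $[P,S]$ recovers the off-diagonal blocks, and show the diagonal part $\tau(P)=P\delta(P)P+(I-P)\delta(P)(I-P)$ is central. Up to that point your plan is sound. But the proposal leaves unexecuted exactly the step where the lemma's content lies: passing from idempotents to an arbitrary rank-one $x\otimes f$ with $f(x)=0$. Such an operator is nilpotent, is not a scalar multiple of an idempotent, and cannot be reached ``by linearity'' from the idempotent case alone. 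The paper handles it with a specific device: pick $y\in K$ with $f(y)=1$, form the auxiliary map $\Delta(A)=\delta(A)-[A,[y\otimes f,\delta(y\otimes f)]]$, use $\Delta(y\otimes f)\in Z(\mathrm{Alg}\mathcal{L})$ and $x\otimes f=[x\otimes f,y\otimes f]$ to deduce $\Delta(x\otimes f)=(I-y\otimes f)\Delta(x\otimes f)(y\otimes f)=z\otimes f$ with $f(z)=0$, and then rewrite $z\otimes f$ itself as a commutator $[z\otimes g,x\otimes f]$ with $g(x)=1$. Your ``delicate case analysis according to whether $f(x_0)=0$ or $f_0(x)=0$'' gestures at this but supplies no mechanism, and without the $\Delta$-trick (or an equivalent) the argument does not close.

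A secondary divergence: you propose to certify centrality of $\tau(x\otimes f)$ by checking commutation with all rank-one operators and then invoking ``the standard fact that an element commuting with $\mathcal{F}(\mathcal{L})$ in a JSL algebra is central.'' That fact is not among the quoted lemmas and is not free of charge (commuting with all rank ones a priori only forces the operator to act as a scalar on each $K\in\mathcal{J}(\mathcal{L})$, and one must still reconcile these scalars, e.g.\ via weak density of the rank-one span). The paper avoids this entirely by testing $\tau(P)$ directly against each Peirce block $A_{ij}$ of an arbitrary $A\in\mathrm{Alg}\mathcal{L}$ using $\delta([P,A_{11}])=0$ and its analogues, which yields centrality with respect to all of $\mathrm{Alg}\mathcal{L}$ in one stroke. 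Either you should adopt that direct computation or supply a proof of the density fact you rely on.
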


\begin{proof}
Let $P$ be an idempotent operator in $\mathcal{F}_{\mathcal{L}}(K)$. Set $P_1 =P$ and $P_2 = I - P$.
Then for any $A_{11} \in P_1\mathrm{Alg}\mathcal{L}P_1$ we have
$$0 = \delta([P_1,A_{11}]) = \delta(P_1)A_{11} - A_{11} \delta(P_1) + P_1\delta(A_{11})- \delta(A_{11})P_1.$$
Multiplying the above equality by $P_1$ from left and right sides gives
$$
P_1\delta(P_1)P_1A_{11} = A_{11}P_1\delta(P_1)P_1.
$$
In an analogous manner, we also get
$$
\begin{aligned}
&P_2\delta(P_1)P_2A_{22} = A_{22}P_2\delta(P_1)P_2,\\
 &P_1\delta(P_1)P_1A_{12} = A_{12}P_2\delta(P1)P_2,\\
 &P_2\delta(P_1)P_2A_{21} = A_{21}P_1\delta(P_1)P_1.
\end{aligned}
$$
These facts imply that $ \tau (P) := P_1\delta(P_1)P_1 + P_2\delta(P_1)P_2 \in Z(\mathrm{Alg}\mathcal{L})$. Now let $S=P_1\delta(P_1)P_2 - P_2\delta(P_1)P_1$, then it is easy to check $S=[\delta(P_1),P_2]=[P_1 ,\delta(P_1)]$. Note that $\mathcal{F}_{\mathcal{L}}(K)$ is an ideal of $\mathrm{Alg}\mathcal{L}$(see Lemma 2.4 in \cite{Lu}), so
it is easy to check that $S$ in $\mathcal{F}_{\mathcal{L}}(K)$ and $\delta(P) = [P, S] + \tau (P)$.

\textbf{Case 1.} If $f(x)\neq 0$, then the result follows from the linearity.

\textbf{Case 2.} Now assume that $f(x) = 0$. In light of Lemma \ref{xxsec2.l2} one can take $y \in K$ such that $f(y) = 1$.
By the previous fact it follows that $\delta(y\otimes f) = [y\otimes f, [y\otimes f,\delta(y\otimes f)]] + \tau(y\otimes f)$.
Let us define a mapping $\Delta: \mathrm{Alg}\mathcal{L}\rightarrow \mathrm{Alg}\mathcal{L}$  by
\begin{equation}\label{xxsec2e8}\Delta(A) =\delta(A) - [A,[y\otimes f,\delta(y\otimes f)]],\end{equation}
where $A\in \mathrm{Alg}\mathcal{L}$. Then $\Delta(y\otimes f)\in Z(\mathrm{Alg}\mathcal{L})$ and therefore,
$$
\begin{aligned}
&\ \Delta(x\otimes f) =\Delta([x\otimes f,y\otimes f]) =[\Delta(x\otimes f),y\otimes f]\\
 &=\Delta(x\otimes f)y\otimes f-y\otimes f\Delta(x\otimes f),\\
\end{aligned}
$$
which implies that $\Delta(x\otimes f)=(I-y\otimes f)\Delta(x\otimes f)y\otimes f$. Furthermore, $ \Delta(x\otimes f)$ can be rewritten as $z\otimes f$, where
$z\in K$ and $f(z)=0$.  Choosing $g \in K_-^{\bot}$ such that $g(x)=1$,
we arrive at
\begin{equation}\label{xxsec2e9}
\Delta(x\otimes f)=z\otimes f=(z\otimes g)(x\otimes f)-(x\otimes f)(z\otimes g).
\end{equation}
Combining (\ref{xxsec2e8}) with (\ref{xxsec2e9}) yields
$$
\begin{aligned}
\delta(x\otimes f)&=[x\otimes f,[y\otimes f,\delta(y\otimes f)]]-[x\otimes f,z\otimes g]\\
 &=[x\otimes f,[y\otimes f,\delta(y\otimes f)]-z\otimes g],
\end{aligned}
$$
from which we can see that the conclusion still remains to be established in this case.
\end{proof}

The following proposition will give a new characterization to the family of linear mappings $\{L_n\}_{n=0}^{\infty}: \mathrm{Alg}\mathcal{L}\rightarrow \mathrm{Alg}\mathcal{L}$ satisfying
$$
L_n([A, B])=\sum_{i+j=n}[L_i(A), L_j(B)]
$$
for any $A, B\in\mathrm{Alg}\mathcal{L}$ with $AB = 0$. These properties will
partly enable us to transfer the problems of $\{L_n\}_{n=0}^{\infty}$ into the same problems related
to Lie derivations on $\mathcal{J}$-subspace
lattice algebras by acting on zero product.

\begin{proposition}\label{xxsec2.p1}
Let $\mathcal{L}$ be a $\mathcal{J}$-subspace lattice on a Banach space $X$ over the
real or complex field $\mathbb{F}$ and $\mathrm{Alg}\mathcal{L}$ be the associated $\mathcal{J}$-subspace
lattice algebra. Suppose that $\{L_n\}_{n=0}^{\infty}: \mathrm{Alg}\mathcal{L}\rightarrow \mathrm{Alg}\mathcal{L}$ is a family of linear mappings satisfying
$$
L_n([A, B])=\sum_{i+j=n}[L_i(A), L_j(B)]
$$ for any $A, B\in\mathrm{Alg}\mathcal{L}$ with $AB = 0$.  Then there is a sequence $\{\delta_n\}_{n=0}^\infty :\mathrm{Alg}\mathcal{L}\rightarrow \mathrm{Alg}\mathcal{L}$ of linear mappings satisfying \begin{equation}
\label{xxsec2e9c1}\delta_n([A, F])=[\delta_n(A), F]+[A, \delta_n(F)]\end{equation}
for all $A\in \mathrm{Alg}\mathcal{L}$ and $F\in \mathcal{F}(\mathcal{L})$
such that
$$
(n+1)L_{n+1}=\sum_{k=0}^nL_{n-k}  \delta_{k+1}
$$
for each non-negative integer $n$.
\end{proposition}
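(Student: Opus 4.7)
My plan is to define the $\delta_n$ recursively from the target relation and then verify~(\ref{xxsec2e9c1}) by strong induction on $n$. Setting $n=0$ and using $L_0=\mathrm{id}$ forces $\delta_1=L_1$, and solving for the highest-index term in general yields
$$\delta_{n+1}:=(n+1)L_{n+1}-\sum_{k=0}^{n-1}L_{n-k}\delta_{k+1}.$$
Each $\delta_n$ is linear by construction, and the relation of the proposition holds by definition, so only (\ref{xxsec2e9c1}) requires verification. The base case $n=1$ is Lemma~\ref{xxsec2.l5} at $n=1$, where the middle sum is empty.

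For the inductive step I would assume $\delta_1,\dots,\delta_n$ all satisfy~(\ref{xxsec2e9c1}) and apply the recursion to $[A,F]$, to $A$, and to $F$ separately, then expand every $L_{n+1}([A,F])$ and $L_{n-k}([\delta_{k+1}(A),F])$ by Lemma~\ref{xxsec2.l5}. The combinatorics that makes all cross-terms cancel is cleanly captured by passing to the formal power series $L(t)=\sum L_n t^n$ and $D(t)=\sum \delta_{k+1}t^k$: the recursion is $L'(t)=L(t)D(t)$, and Lemma~\ref{xxsec2.l5} is $L(t)([A,F])=[L(t)(A),L(t)(F)]$. Differentiating the latter in $t$, substituting $L'(t)=L(t)D(t)$, and formally inverting $L(t)$ (legitimate since $L_0=\mathrm{id}$) gives $D(t)([A,F])=[D(t)(A),F]+[A,D(t)(F)]$, whose coefficient of $t^n$ is precisely~(\ref{xxsec2e9c1}).

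The one real obstacle is that terms $L_{n-k}([A,\delta_{k+1}(F)])$ appear, and Lemma~\ref{xxsec2.l5} requires the second slot to lie in $\mathcal{F}(\mathcal{L})$, which $\delta_{k+1}(F)$ a priori need not. To resolve this I would apply Lemma~\ref{xxsec2.l7} to $\delta_{k+1}$, licensed by the induction hypothesis for $k+1\le n$: on each rank-one operator $x\otimes f\in \mathrm{Alg}\mathcal{L}$, $\delta_{k+1}(x\otimes f)=[x\otimes f,S]+\tau(x\otimes f)$ with $\tau(x\otimes f)\in Z(\mathrm{Alg}\mathcal{L})$. Combined with Lemma~\ref{xxsec2.l4} and linearity this yields a decomposition $\delta_{k+1}(F)=G_k+Z_k$ with $G_k\in\mathcal{F}(\mathcal{L})$ and $Z_k\in Z(\mathrm{Alg}\mathcal{L})$, so $[A,\delta_{k+1}(F)]=[A,G_k]\in\mathcal{F}(\mathcal{L})$ and Lemma~\ref{xxsec2.l5} now applies. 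A companion claim, provable by an induction modelled on Claim~1 of Lemma~\ref{xxsec2.l5}, that $L_m(Z(\mathrm{Alg}\mathcal{L}))\subseteq Z(\mathrm{Alg}\mathcal{L})$ for every $m$, ensures that the central residues $L_{n-k}(Z_k)$ also vanish inside commutators with $A$, so they do not disturb the bookkeeping. With these two auxiliary facts, the inductive step reduces to the (now purely algebraic) cancellation dictated by $L'(t)=L(t)D(t)$, and the hard part is simply organizing that cancellation coefficient-by-coefficient.
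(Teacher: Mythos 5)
Your proposal is correct and follows essentially the same route as the paper: the same recursive definition $\delta_{n+1}=(n+1)L_{n+1}-\sum_{k=0}^{n-1}L_{n-k}\delta_{k+1}$, induction via Lemma \ref{xxsec2.l5}, the use of Lemma \ref{xxsec2.l7} to write $\delta_{k+1}$ on rank-one (hence finite-rank) operators as an inner part in $\mathcal{F}_{\mathcal{L}}(K)$ plus a central part, and the observation that the images under the $L_m$ of those central residues stay central so their commutators vanish. The generating-function identity $L'(t)=L(t)D(t)$ is merely a compact repackaging of the reindexing ($r=k+i$) that the paper carries out explicitly in the $U+V$ computation.
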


\begin{proof}
Let us prove this proposition by induction on the index $n$. If $n=0$, then
$$
L_1([A, B])=[L_1(A), L_0(B)]+[L_0(A),L_1(B)]=[L_1(A), B]+[A,
L_1(B)]
$$
for any $A, B \in \mathrm{Alg}\mathcal{L}$ with $AB= 0$. If we set $\delta_1=L_1$, then
$\delta_1([A, B])=[\delta_1(A), B]+[A, \delta_1(B)]$ for any $A, B \in \mathrm{Alg}\mathcal{L}$ with $AB= 0$. By Lemma \ref{xxsec2.l5} we know that  $\delta_1([A, F])=[\delta_1(A), F]+[A, \delta_1(F)]$ holds for all $A\in \mathrm{Alg}\mathcal{L}$ and $F\in \mathcal{F}(\mathcal{L})$.

We now suppose that $\delta_k$ is a well-established mapping of $ \mathrm{Alg}\mathcal{L}$ for each $k\leq n$. Let us
define
$$
\delta_{n+1}=(n+1)L_{n+1}-\sum_{k=0}^{n-1}L_{n-k}\delta_{k+1}.
$$

It is sufficient to show that $\delta_{n+1}: \mathrm{Alg}\mathcal{L}\rightarrow \mathrm{Alg}\mathcal{L}$ satisfies
the equality (\ref{xxsec2e9c1}).
For any  $A \in \mathrm{Alg}\mathcal{L}$ and any rank one operator $x\otimes f\in \mathrm{Alg}\mathcal{L}$,
by the induction hypothesis, we can compute that
$$
\begin{aligned}
\delta_{n+1}([A,x\otimes f])&=(n+1)L_{n+1}([A,x\otimes f])-\sum_{k=0}^{n-1}L_{n-k}\delta_{k+1}([A,x\otimes f])\\
&=(n+1)L_{n+1}([A,x\otimes f])-\sum_{k=0}^{n-1}L_{n-k}[\delta_{k+1}(A),x\otimes f]\\&\ \ -\sum_{k=0}^{n-1}L_{n-k}[A,\delta_{k+1}(x\otimes f)].
\end{aligned}
$$
Applying Lemma \ref{xxsec2.l7}, we can further get
$$
\begin{aligned}
\delta_{n+1}([A,x\otimes f])&=(n+1)L_{n+1}([A,x\otimes f])-\sum_{k=0}^{n-1}L_{n-k}[\delta_{k+1}(A),x\otimes f]\\&
\ \ -\sum_{k=0}^{n-1}L_{n-k}[A,(x\otimes f){S}_{k+1}-{S}_{k+1}(x\otimes f)]\\
&=(n+1)\sum_{k=0}^{n+1}[L_k(A),
L_{n+1-k}(x\otimes f)]-\sum_{k=0}^{n-1}\sum_{i=0}^{n-k}[L_i\delta_{k+1}(A),
L_{n-k-i}(x\otimes f)]\\
&\ \ -\sum_{k=0}^{n-1}\sum_{i=0}^{n-k}[L_i(A),
L_{n-k-i}((x\otimes f){S}_{k+1}-{S}_{k+1}(x\otimes f))].
\end{aligned}
$$
For convenience, let us write
$$
\begin{aligned}
U&=\sum_{k=0}^{n+1}k[L_k(A), L_{n+1-k}(x\otimes f)]-\sum_{k=0}^{n-1}\sum_{i=0}^{n-k}[L_i\delta_{k+1}(A), L_{n-k-i}(x\otimes f)],\\
V&=\sum_{k=0}^{n+1}(n+1-k)[L_k(A),
L_{n+1-k}(x\otimes f)]-\sum_{k=0}^{n-1}\sum_{i=0}^{n-k}[L_i(A),
L_{n-k-i}((x\otimes f){S}_{k+1}-{S}_{k+1}(x\otimes f))].
\end{aligned}
$$
Then it is easy to verify $\delta_{n+1}([A,x\otimes f])=U+V$. In the expression of sum
$\sum_{k=0}^{n-1}\sum_{i=0}^{n-k}$, we know that $k\neq n$ and
$0\leq k+i\leq n$. If we set $r=k+i$, then
$$\begin{aligned}
U&=\sum_{k=0}^{n+1}k[L_k(A), L_{n+1-k}(x\otimes f)]-\sum_{r=0}^{n}\sum_{0\leq k\leq r,k\neq n}[L_{r-k}\delta_{k+1}(A),L_{n-r}(x\otimes f)]\\
&=\sum_{r=0}^{n}(r+1)[L_{r+1}(A),
L_{n-r}(x\otimes f)]-\sum_{k=0}^{n-1}[L_{n-k}\delta_{k+1}(A), x\otimes f]\\
&\hspace{10pt}- \sum_{r=0}^{n-1}\sum_{k=0}^{r}[L_{r-k}\delta_{k+1}(A),
L_{n-r}(x\otimes f)]\\
&=\sum_{r=0}^{n-1}[(r+1)L_{r+1}(A)-\sum_{k=0}^{r}L_{r-k}\delta_{k+1}(A),L_{n-r}(x\otimes f)]\hspace{45pt}\\
&\quad +(n+1)[L_{n+1}(A),
x\otimes f]-\sum_{k=0}^{n-1}[L_{n-k}\delta_{k+1}(A), x\otimes f].
\end{aligned}$$
Applying  the induction hypothesis to the above equality,  we obtain
$$
U=[(n+1)L_{n+1}(A)-\sum_{k=0}^{n-1}L_{n-k}\delta_{k+1}(A),
x\otimes f]=[\delta_{n+1}(A), x\otimes f].
$$
On the other hand, a direct computation gives
$$\begin{aligned}
V&=\sum_{i=0}^{n}[L_i(A), (n+1-i)L_{n+1-i}(x\otimes f)] -\sum_{i=1}^{n}\sum_{k=0}^{n-i}[L_i(A), L_{n-k-i}((x\otimes f){S}_{k+1}-{S}_{k+1}(x\otimes f))]\\
&\ \ \ \ -\sum_{k=0}^{n-1}[A, L_{n-k}((x\otimes f){S}_{k+1}-{S}_{k+1}(x\otimes f))]\\
&=\sum_{i=1}^{n}[L_i(A), (n+1-i)L_{n+1-i}(x\otimes f) -\sum_{k=0}^{n-i}L_{n-k-i}((x\otimes f){S}_{k+1}-{S}_{k+1}(x\otimes f))]\\
&\ \ \ +[A, (n+1)L_{n+1}(x\otimes f)]-\sum_{k=0}^{n-1}[A, L_{n-k}((x\otimes f){S}_{k+1}-{S}_{k+1}(x\otimes f))]
\end{aligned}$$
Furthermore, using the induction hypothesis again, we obtain
\begin{equation}\label{xxsec2e9c5}
\begin{aligned}
V&=\sum_{i=1}^{n}[L_i(A),
(n+1-i)L_{n+1-i}(x\otimes f)-\sum_{k=0}^{n-i}L_{n-k-i} \delta_{k+1}(x\otimes f)]\\
&\ \ +[A,(n+1)L_{n+1}(x\otimes f)-\sum_{k=0}^{n-1}L_{n-k} \delta_{k+1}(x\otimes f)]]\\
&\ \ +\sum_{i=1}^{n}[L_i(A),\sum_{k=0}^{n-i}L_{n-k-i} \tau_{k+1}(x\otimes f)]+[A,\sum_{k=0}^{n-1}L_{n-k} \tau_{k+1}(x\otimes f)]\\
&=[A, (n+1)L_{n+1}(x\otimes f)-\sum_{k=0}^{n-1}L_{n-k} \delta_{k+1}(x\otimes f)]\\
&\ \ +\sum_{i=1}^{n}[L_i(A),\sum_{k=0}^{n-i}L_{n-k-i} \tau_{k+1}(x\otimes f)]+[A,\sum_{k=0}^{n-1}L_{n-k} \tau_{k+1}(x\otimes f)].
\end{aligned}
\end{equation}
Note that
$$
\begin{aligned}
&\sum_{i=1}^{n}[L_i(A),\sum_{k=0}^{n-i}L_{n-k-i} \tau_{k+1}(x\otimes f)]+[A,\sum_{k=0}^{n-1}L_{n-k} \tau_{k+1}(x\otimes f)]\\
&=[A,L_{n} \tau_{1}(x\otimes f)+L_{n-1} \tau_{2}(x\otimes f)+\cdots+L_{1} \tau_{n}(x\otimes f)]\\
& \ \ +[L_1(A),L_{n-1} \tau_{1}(x\otimes f)+L_{n-2} \tau_{2}(x\otimes f) +\cdots+L_{1} \tau_{n-1}(x\otimes f)+L_{0} \tau_{n}(x\otimes f)]\\
& \ \ +\cdots\\
& \ \ +[L_{n-1}(A),L_{1} \tau_{1}(x\otimes f) + \tau_{2}(x\otimes f)]\\
& \ \ +[L_n(A), \tau_{1}(x\otimes f)]\\
&=L_n[A,\tau_1(x\otimes f) +L_{n-1}[A,\tau_2(x\otimes f)]+\cdots+L_1[A,\tau_n(x\otimes f)],
\end{aligned}
$$
Taking into account the relation (\ref{xxsec2e9c5}) yields
$$
\begin{aligned}
V&=[A, (n+1)L_{n+1}(x\otimes f)-\sum_{k=0}^{n-1}L_{n-k}(\delta_{k+1}(x\otimes f)]\\
&\ \ +L_n[A,\tau_1(x\otimes f)]+L_{n-1}[A,\tau_2(x\otimes f)]+\cdots+L_1[A,\tau_n(x\otimes f)]\\
 &=[A, \delta_{n+1}(x\otimes f)].
\end{aligned}
$$
Finally, we conclude that
$$
\delta_{n+1}([A, x\otimes f])=U+V=[\delta_{n+1}(A), x\otimes f]+[A, \delta_{n+1}(x\otimes f)].
$$
It follows from Lemma \ref{xxsec2.l4}  that $\delta_{n+1}$ satisfies (\ref{xxsec2e9c1}) due to the linearity.
\end{proof}

 Before giving our main result we recall some of the basic concepts about inner higher derivations which will be used in latter. Let $\mathcal{A}$ be an associative algebra. We denote the set of higher derivations of order $m$ on  $\mathcal{A}$ by $D_m(\mathcal{A})$, which is a group under the multiplication $*$ defined by
$$
(d*d')_n=\sum_{\substack{i+j=n}}d_i\circ d'_j,\ \ n\leq m,
$$
where $d, d'\in D_m(\mathcal{A})$(see \cite{Nowicki} and the
references therein). Let $\mathbf{a}=(a_n)_{n\leq m}$ be a sequence in $\mathcal{A}$.  Denote by $\Delta(\mathbf{a})$ a family of mappings defined by
$$\Delta(\mathbf{a})_n=([a_1,1]*[a_2, 2]*\cdots*[a_n, n])_n, \ \  n\leq m \ \ \ \eqno(\bigstar)$$
where $$[a,k]_n(x)=\left\{
                    \begin{array}{ll}
                       x, & \hbox{if $n=0$ ;} \\
                       0, & \hbox{if $k\nmid n$;} \\
                      a^rx-a^{r-1}xa , & \hbox{if $n\neq0$, and $ n=kr$.}
                    \end{array}
                  \right.
$$
Then $\Delta(\mathbf{a})\in D_m(\mathcal{A})$ is called an \textit{inner higher derivation} of order $m$ (\cite{Nowicki}).

In the context of this article, $\mathbf{T}_K=(T_{Kn})_{n\in \mathbb{N}}$ is a sequence in $\mathcal{B}(K)$ and $\Delta(\mathbf{T})$ is a family of mappings defined by $$\Delta(\mathbf{T})_{Kn}=([{T_K}_1,1]*[{T_K}_2, 2]*\cdots*[{T_K}_n, n])_{n}. $$ For example, let $A  \in \mathrm{Alg}\mathcal{L}$, then
$$\begin{aligned}
&\Delta(\mathbf{T})_{K1}(A)=T_{K1}A-AT_{K1}\\
&\Delta(\mathbf{T})_{K2}(A)=T_{K1}^2A-T_{K1}AT_{K1}+T_{K2}A-AT_{K2},\\
&\Delta(\mathbf{T})_{K3}(A)=T_{K1}^3A-T_{K1}^2AT_{K1}+T_{K1}T_{K2}A+A T_{K2}T_{K1}\\
&\ \ \ \ \ \ \ \ \ \ \ \ \ \ \ \ \ \ \  -T_{K1}AT_{K2}-T_{K2}AT_{K1}+T_{K3}A-AT_{K3}.
\end{aligned}$$

Now we are in a position to state the main theorem of this section.

\begin{theorem}\label{xxsec1.1}
Let $\mathcal{L}$ be a $\mathcal{J}$-subspace lattice on a Banach space $X$ over the
real or complex field $\mathbb{F}$ and $ \mathrm{Alg}\mathcal{L}$ be the associated $\mathcal{J}$-subspace
lattice algebra. Suppose that $\{L_n\}_{n=0}^{\infty}: \mathrm{Alg}\mathcal{L}\rightarrow \mathrm{Alg}\mathcal{L}$ is a family of linear mappings. Then $\{L_n\}_{n=0}^{\infty}$ satisfies
$$
L_n([A, B])=\sum_{i+j=n}[L_i(A), L_j(B)]
$$ for any $A, B\in\mathrm{Alg}\mathcal{L}$ with $AB = 0$ if and
only if for each $K \in \mathcal{J}(\mathcal{L})$, there exist a family of linear mappings $\{\Delta(\mathbf{T})_{Kn} \}_{n=1}^{\infty}: \mathrm{Alg}\mathcal{L}\rightarrow \mathrm{Alg}\mathcal{L}$ and a sequence of linear functionals $\{h_{Kn}\}_{n\in \mathbb{N}}: \mathrm{Alg}\mathcal{L}\rightarrow \mathbb{F}$ satisfying $h_{Kn}([A,B]) = 0$ whenever $AB = 0$ such
that $ L_n(A)x = (\Delta(\mathbf{T})_{Kn} (A)+ h_{Kn}(A)I)x$ for all $A \in \mathrm{Alg}\mathcal{L}$ and all $x \in K$.
\end{theorem}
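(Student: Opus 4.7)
The plan is to prove both directions. Sufficiency is a direct verification, while necessity proceeds by induction on $n$, invoking Proposition~\ref{xxsec2.p1} to reduce to a zero-product Lie derivation problem at each stage and then applying the characterization of the latter (compare \cite{Qi} and Lemma~\ref{xxsec2.l7}).

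\medskip

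\noindent\textbf{Sufficiency.} Every ordinary higher derivation is automatically a Lie higher derivation, since $d_n(AB-BA)=\sum_{i+j=n}d_i(A)d_j(B)-\sum_{i+j=n}d_i(B)d_j(A)=\sum_{i+j=n}[d_i(A),d_j(B)]$ after relabeling. In particular, the inner higher derivation $\{\Delta(\mathbf{T})_{Kn}\}$ on $\mathcal{B}(K)$ satisfies the Lie higher derivation identity on arbitrary pairs, not merely zero-product pairs. The scalar contributions $h_{Ki}(A)I$ are central and drop out of the inner commutators, while the hypothesis $h_{Kn}([A,B])=0$ whenever $AB=0$ cancels the outer scalar. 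Hence $L_n([A,B])x=\sum_{i+j=n}[L_i(A),L_j(B)]x$ for every $x\in K$ and every $K\in\mathcal{J}(\mathcal{L})$; the JSL axiom $\bigvee\{K:K\in\mathcal{J}(\mathcal{L})\}=X$ promotes this pointwise identity to the desired operator equality.

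\medskip

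\noindent\textbf{Necessity (outline).} Induct on $n$. For $n=1$ the mapping $L_1$ is a Lie derivation on $\mathrm{Alg}\mathcal{L}$ vanishing on zero products, so by the characterization from \cite{Qi} (consistent with Lemma~\ref{xxsec2.l7}), for each $K\in\mathcal{J}(\mathcal{L})$ one obtains $T_{K,1}\in\mathcal{B}(K)$ and a linear functional $h_{K,1}$ annihilating $[A,B]$ whenever $AB=0$ such that $L_1(A)x=([T_{K,1},A]+h_{K,1}(A)I)x=(\Delta(\mathbf{T})_{K,1}(A)+h_{K,1}(A)I)x$ for $x\in K$. Assume the representation is established for all indices up to $n$. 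By Proposition~\ref{xxsec2.p1} there exist $\delta_1,\ldots,\delta_{n+1}$ with
\[
(n+1)L_{n+1} \;=\; \sum_{k=0}^{n} L_{n-k}\,\delta_{k+1},
\]
and the same Lie-derivation classification applied to $\delta_{n+1}$ yields $\widetilde{T}_{K,n+1}\in\mathcal{B}(K)$ and $\widetilde{h}_{K,n+1}$ with $\delta_{n+1}(A)x=([\widetilde{T}_{K,n+1},A]+\widetilde{h}_{K,n+1}(A)I)x$ on $K$. Using Claim~1 of Lemma~\ref{xxsec2.l5}, which guarantees $L_m(I)\in\mathbb{F}I$ and hence $\Delta(\mathbf{T})_{K,m}(I)=0$ for $m\ge 1$, expand the recursion on the $K$-component via the inductive hypothesis and the explicit form of each $\delta_{k+1}$ to reach an expression of the shape
\[
(n+1)L_{n+1}(A)x \;=\; \Bigl([\widetilde{T}_{K,n+1},A]+\sum_{k=0}^{n-1}\Delta(\mathbf{T})_{K,n-k}\bigl([\widetilde{T}_{K,k+1},A]\bigr)\Bigr)x+\varphi(A)\,x,
\]
where $\varphi(A)$ is a scalar depending linearly on $A$ and collecting the contributions of all $\widetilde{h}_{K,j}$ and the previously constructed $h_{K,m}$. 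One now defines $T_{K,n+1}\in\mathcal{B}(K)$ recursively so that $(n+1)\Delta(\mathbf{T})_{K,n+1}$ equals the operator part of the bracketed expression, and sets $h_{K,n+1}(A):=\varphi(A)/(n+1)$; vanishing of $h_{K,n+1}$ on zero-product commutators is inherited from the analogous properties of the $\widetilde{h}_{K,j}$ and the $h_{K,m}$.

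\medskip

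\noindent\textbf{Main obstacle.} The real content is the combinatorial identity needed to extract $T_{K,n+1}$: one must show that
\[
\bigl[(n+1)T_{K,n+1},A\bigr] \;=\; [\widetilde{T}_{K,n+1},A]+\sum_{k=0}^{n-1}\Delta(\mathbf{T})_{K,n-k}\bigl([\widetilde{T}_{K,k+1},A]\bigr)-(n+1)\Delta'(A)
\]
can be realized as a commutator with a single operator, where $\Delta'(A)$ collects the summands of $\Delta(\mathbf{T})_{K,n+1}(A)$ that involve only $T_{K,1},\ldots,T_{K,n}$. That the right-hand side does have the shape $[S,A]$ modulo a purely scalar remainder (which is then absorbed into $h_{K,n+1}$) is, in effect, a differentiated form of the group law for the star product $[T_{K,1},1]*[T_{K,2},2]*\cdots$ defining $\Delta(\mathbf{T})_K$; this is precisely the place where the definition $(\bigstar)$ does its work. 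Once this identity is verified the induction closes, producing the desired representation for $L_{n+1}$ and completing the characterization.
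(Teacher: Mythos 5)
Your overall strategy (sufficiency by direct verification; necessity by induction through Proposition \ref{xxsec2.p1} and a classification of the resulting maps $\delta_i$) coincides with the paper's, but your outline stops exactly where the paper's proof does its real work, so there is a genuine gap. You explicitly flag as the ``main obstacle'' the claim that the operator part of the expanded recursion $(n+1)L_{n+1}=\sum_{k=0}^{n}L_{n-k}\delta_{k+1}$ can be rewritten as $(n+1)\Delta(\mathbf{T})_{K,n+1}(A)$ for a single new operator $T_{K,n+1}$, and you offer only the heuristic that this is ``a differentiated form of the group law'' for the star product. That assertion is precisely what has to be proved, and your direct expansion for general $A\in\mathrm{Alg}\mathcal{L}$ makes it awkward to prove, because for general $A$ the maps $L_{n-k}$ and $\delta_{k+1}$ are only known modulo scalar summands $h_{K,m}(\cdot)I$, and these scalars feed back into the operator part nonlinearly through the compositions $L_{n-k}\circ\delta_{k+1}$. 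The paper avoids this by a two-stage argument: it first restricts to $F\in\mathcal{F}(\mathcal{L})$, where the tuple $(\Delta(\mathbf{T})_{K1},\dots,\Delta(\mathbf{T})_{Kn},\Delta(\mathbf{T})_{K\,n+1})$ is an honest higher derivation of order $n+1$; the difference between the constructed top component and any component of the form $(\bigstar)$ is then an ordinary derivation of $\mathcal{F}(\mathcal{L})$ (\cite[Lemma 4.1]{Nowicki}, i.e.\ the group structure of $D_m$), which is implemented by a single operator on $K$ via the identity $d_i(F)x=(R_{Ki}F-FR_{Ki})x$; absorbing that operator into $T_{K\,n+1}$ yields the $(\bigstar)$ form. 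Only afterwards does the paper pass from finite-rank $F$ to arbitrary $A$, by the rank-one computation of Step 2 showing that $(L_{n+1}(A)-\Delta(\mathbf{T})_{K\,n+1}(A))x$ is a scalar multiple of $x$. Without some version of this finite-rank detour, or an explicit verification of your combinatorial identity, the induction does not close.

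A secondary inaccuracy: you classify $\delta_{n+1}$ by ``the same Lie-derivation classification'' for zero products, but Proposition \ref{xxsec2.p1} only gives $\delta_{n+1}([A,F])=[\delta_{n+1}(A),F]+[A,\delta_{n+1}(F)]$ for $A\in\mathrm{Alg}\mathcal{L}$ and $F\in\mathcal{F}(\mathcal{L})$; it is not asserted that $\delta_{n+1}$ satisfies the zero-product hypothesis of Corollary \ref{xxsec2.c1}. The correct route, as in the paper's Claim 1, is to note that $\delta_{n+1}$ restricts to a Lie derivation of $\mathcal{F}(\mathcal{L})$, apply \cite[Theorem 3.1]{Lu} to split it there into a derivation plus a central map, and then extend the resulting representation $\delta_{n+1}(A)x=(R_{K\,n+1}A-AR_{K\,n+1})x+h'_{K\,n+1}(A)x$ to all of $\mathrm{Alg}\mathcal{L}$ by the rank-one argument. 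The conclusion you want is true, but the citation you give does not directly deliver it.
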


\begin{proof}
The ``if'' part is obvious. We will prove the ``only if''
part. The proof will be obtianed via an induction method.

\textbf{Claim 1.} $ L_1(A)x = (\Delta(\mathbf{T})_{K1} (A)+ h_{K1}(A))x$ for all $A \in \mathrm{Alg}\mathcal{L}$  and for all $x\in K$,  and the linear
mapping $h_{K1}: \mathrm{Alg}\mathcal{L}\rightarrow \mathbb{F}$ vanishes on all commutators.

It follows from Proposition \ref{xxsec2.p1} that there is a
sequence of linear mappings $\{\delta_n\}_{n=0}^\infty :\mathrm{Alg}\mathcal{L}\rightarrow \mathrm{Alg}\mathcal{L}$ satisfying $\delta_n([A, F])=[\delta_n(A), F]+[A, \delta_n(F)]$ for all $A\in \mathrm{Alg}\mathcal{L}$, $F\in \mathcal{F}(\mathcal{L})$ such that
$$
(n+1)L_{n+1}=\sum_{k=0}^nL_{n-k} \delta_{k+1}
$$
for each non-negative integer $n$.

Clearly, the restriction of each $\delta_i$ to $\mathcal{F}(\mathcal{L})$ is a Lie derivation. This implies that it is standard by \cite[Theorem 3.1]{Lu}. That is, there exists a derivation $d_i:  \mathcal{F}(\mathcal{L})\rightarrow \mathrm{Alg}\mathcal{L}$ and a linear mapping $\tau _i: \mathcal{F}(\mathcal{L})\rightarrow Z(\mathrm{Alg}\mathcal{L})$ vanishing on every commutator such
that
\begin{equation}\label{xxsec2e10}\delta_i(F) = d_i(F) + \tau_i(F)\end{equation}
for every $F\in \mathcal{F}(\mathcal{L})$.

Let us fix an element $K\in \mathcal{J}(\mathcal{L})$ and choose $x_K \in K$. In view of Lemma \ref{xxsec2.l2}, one can take $f_K \in K_-^{\bot} $
such that
$f_K(x_K) =1$. Note that $x \otimes f_K\in \mathcal{F}(\mathcal{L})$ for all $x \in K$. Thus we can define a linear mapping $R_{Ki}: K\rightarrow K$ by
$$
R_{Ki}(x) = d_i(x \otimes f_K)x_K
$$
for all $x \in K$. Then for any $F\in \mathcal{F}(\mathcal{L})$ and any $x \in K$, we have
$$R_{Ki}(Fx) = d_i(Fx \otimes f_K)x_K = d_i(F)(x \otimes f_K)x_K + Fd_i(x \otimes f_K)x_K,$$
from which we can get
\begin{equation}\label{xxsec2et19}d_i(F)x = (R_{Ki}F - FR_{Ki})x \end{equation}for all $x \in K$. Since each
$\delta_i(i\in \mathbb{N})$ in the sequence
$\{\delta_n\}_{n=0}^\infty$ is of the form (\ref{xxsec2e10}), each $\delta_i(i\in \mathbb{N})$ can be written as
\begin{equation}\label{xxsec2et20}\delta_i(F)x =(R_{Ki}F-FR_{Ki})x + \tau_i(F)x\end{equation}
for every $F\in \mathcal{F}(\mathcal{L})$ and for all $x \in K$. Here we omit the verification of the boundedness of each $R_{Ki}$, which is similar to the proof of \cite[The Main Theorem]{Lu}.

In view of equality (\ref{xxsec2et20}) we assert
$$
\delta_i([A, F])x= (R_{Ki}(AF - FA) - (AF - FA)R_{Ki})x + \tau_i([A, F])x
$$
for all $x \in K$. On the other hand, by Lemma \ref{xxsec2.l5} we conclude
$$
\begin{aligned}
  \delta_i([A, F])x&=[\delta_i(A), F]x + [A, d_i(F)]x\\
 &=(\delta_i(A)F - F\delta_i(A))x + (A(R_{Ki}F - FR_{Ki}) - (R_{Ki}F - FR_{Ki})A)x.
\end{aligned}
$$
for all $x \in K$. Comparing the last two equalities gives
$$(\delta_i(A) -(R_{Ki}A - AR_{Ki}))Fx = F(\delta_i(A) - (R_{Ki}A - AR_{Ki}))x + \tau_i ([A, F])x.$$for all $x \in K$.
Let $y \in K$. Choosing $f \in\operatorname{dim}K_-^{\bot} $
with $f(x) = 1$ and then putting  $F =y\otimes f$ in the last equation, we
arrive at
\begin{equation}\label{xxsec2e15}(\delta_i(A)- (R_{Ki}A- AR_{Ki}))y = f((\delta_i(A)-(R_{Ki}A - AR_{Ki}))x)y +\tau_i ([A,y \otimes f])x\end{equation}
for all $y \in K$.
Note that $\tau_i ([A, y \otimes f])$ is in the center
of $ \mathrm{Alg}\mathcal{L}$, So the restriction of $\tau_i ([A, x \otimes f])$ to arbitrary $x\in K$ is in fact a scalar multiple of $x$. Let us now take $x=y$. It follows from equality (\ref{xxsec2e15}) that $\delta_i(A)- (R_{Ki}A- AR_{Ki}))y$ is a scalar multiple of $y$. Consequently, there exists a scalar $h'_{Ki}(A)$ such that
$$(\delta_i(A) - (R_{Ki}A - AR_{Ki}))y = h'_{Ki}(A)I_Ky$$
for all $y \in K$ and all $A \in \mathrm{Alg}\mathcal{L}$. Using the linearity of $\delta_i$ one can easily check that $h'_{Ki}$ is a linear mapping.
 Therefore,
\begin{equation}\label{xxsec2e20c1}\begin{aligned}L_1(A)x&= \delta_1(A)x =(R_{K1}A-AR_{K1})x +  h'_{K1}(A)I_Kx\\
&=\Delta(\mathbf{T})_{K1} (A)x+ h_{K1}(A)I_Kx\end{aligned}\end{equation}
for all $A \in \mathrm{Alg}\mathcal{L}$  and for all $x\in K$. Moreover, by the assumption on $L_1$ we know that $h_{K1}([A,B]) = 0$ whenever $AB=0$ for any $A,B \in\mathrm{Alg}\mathcal{L}$.

We now suppose that $L_k$ is a well-established mapping for each $k\leq n$. Then we need only to prove the following Claim 2.

\textbf{Claim 2.}  $ L_{n+1}(A)x = (\Delta(\mathbf{T})_{Kn+1} (A)+ h_{Kn+1}(A))x$ for all $A \in \mathrm{Alg}\mathcal{L}$, $x\in K$,
and $h_{Kn+1}: \mathrm{Alg}\mathcal{L}\rightarrow \mathbb{F}$ vanishes on all commutators.

The proof of this claim will be realized through the following two steps.

\textbf{Step 1.} $L_{n+1}(F)x=\Delta(\mathbf{T})_{K n+1}(F)x+S_{n+1}(F)x$ for all $F \in \mathcal{F}(\mathcal{L})$  and for all $x\in K$ and there exists a scalar $\lambda_{Kn+1}(F)$ such that $S_{n+1}(F)x=\lambda_{Kn+1}(F)x$.

Note that $\Delta(\mathbf{T})_{Ki}$ and $S_i$($1\leq i\leq n$)  have been well established due to Claim 1. Hence we have
\begin{equation}\label{xxsec2e20c3}
\begin{aligned}
L_{n+1}(F)x
&=\frac{1}{n+1}(L_n\delta_1 +L_{n-1}\delta_2 +\cdots +L_1\delta_{n} +L_0\delta_{n+1} )(F)x\\
&=\frac{1}{n+1}[(\Delta(\mathbf{T})_{Kn}  + S_{n}  )(R_{K1}F - FR_{K1} + h'_{K1}(F)  )\\&\hspace{10pt}+(\Delta(\mathbf{T})_{Kn-1}  + S_{n-1} )(R_{K2}F - FR_{K2} + h'_{K2}(F) ) +\cdots\\
&\hspace{10pt} +(\Delta(\mathbf{T})_{K1}  + S_1  )(R_{Kn}F - FR_{Kn}+h'_{Kn}(F))+R_{Kn+1}F - FR_{Kn+1} + h'_{Kn+1}(F) ] x\\
&=\frac{1}{n+1}[\Delta(\mathbf{T})_{Kn} (R_{K1}F-FR_{K1})+\Delta(\mathbf{T})_{Kn-1} (R_{K2}F-FR_{K2})+\cdots\\&\hspace{10pt}+\Delta(\mathbf{T})_{K1}(R_{Kn}F-FR_{Kn})+R_{Kn+1}F - FR_{Kn+1}+S'_{n+1}]x\\
&\triangleq [\Delta(\mathbf{T})_{K n+1}(F) +S_{n+1}(F)]x\\
\end{aligned}
\end{equation}
for all $F \in \mathcal{F}(\mathcal{L})$  and  all $x\in K$.

 It is also easy to verify that $S_{n+1}(F)x=\lambda_{Kn+1}(F)x$
for all $F \in \mathcal{F}(\mathcal{L})$  and  all $x\in K$;  besides,
$ \Delta(\mathbf{T})_{Kn+1}$ in the above collections is a mapping of the form ($\bigstar)$ with order $n+1$. Now let us make a simple proof of the latter. Obviously(or see \cite{Hazewinkel}), $$(\Delta(\mathbf{T})_{K1},\cdots,\Delta(\mathbf{T})_{Kn},\Delta(\mathbf{T})_{Kn+1})$$ is a  higher
derivation of order $n+1$ on $\mathcal{F}(\mathcal{L})$. We might as well assume that $\Delta'(\mathbf{T})_{Kn+1}$ is a a mapping of the form ($\bigstar)$ with order $n+1$, that is,
$$\Delta'(\mathbf{T})_{Kn+1}=([{T_K}_1,1]*[{T_K}_2, 2]*\cdots*[{T_K}_n, n]*[{T_K}'_{n+1}, n+1])_{n+1},$$
then $$(\Delta(\mathbf{T})_{K1},\cdots,\Delta(\mathbf{T})_{Kn},\Delta'(\mathbf{T})_{Kn+1})$$ is also a  higher
derivation of order $n+1$ on $\mathcal{F}(\mathcal{L})$.  A direct calculation shows that $\Delta(\mathbf{T})_{Kn+1}-\Delta'(\mathbf{T})_{Kn+1}$ is a usual derivation  on $\mathcal{F}(\mathcal{L})$(or see \cite[Lemma 4.1]{Nowicki}). Note that equality (\ref{xxsec2et19}) implies that there exists some ${T_K}''_{n+1} \in \mathcal{B}(K)$ such that
$$\Delta(\mathbf{T})_{Kn+1}-\Delta'(\mathbf{T})_{Kn+1}=[{T_K}''_{n+1}, n+1]_{n+1}.$$
Hence, setting ${T_K}'_{n+1}+{T_K}''_{n+1}={T_K}_{n+1}$, we get
$$
\begin{aligned}
\Delta(\mathbf{T})_{Kn+1}&=([{T_K}_1,1]*[{T_K}_2, 2]*\cdots*[{T_K}_n, n]*[{T_K}'_{n+1},n+1])_{n+1}+[{T_K}''_{n+1},n+1]_{n+1}\\
&=([{T_K}_1,1]*[{T_K}_2, 2]*\cdots*[{T_K}_n, n]*[{T_K}'_{n+1}+{T_K}''_{n+1},n+1])_{n+1}\\
&=([{T_K}_1,1]*[{T_K}_2, 2]*\cdots*[{T_K}_n, n]*[{T_K}_{n+1},n+1])_{n+1},
\end{aligned}
$$
which is the desired form.

\textbf{Step 2.} $L_{n+1}$ has the desired form and the Claim 2 holds.

Take any operator $A\in \mathrm{Alg}\mathcal{L} $. For any $K \in \mathcal{J}(\mathcal{L})$ and any $x \in K$, by Lemma \ref{xxsec2.l2}., there exists some $f\in K_-^{\bot} $ such that $f(x)=1$. Note that $\mathcal{F}_{\mathcal{L}}(K)$ is a ideal of $\mathrm{Alg}\mathcal{L}$, so by equality (\ref{xxsec2e20c3}) we have
$$
L_{n+1}([A,x\otimes f])x= \Delta(\mathbf{T})_{K n+1}([A,x\otimes f])x+S_{n+1}([A,x\otimes f])x\\
$$
for all $x\in K$. On the other hand, using Lemma \ref{xxsec2.l5} and the induction hypothesis, we get
$$
\begin{aligned}
 L_{n+1}([A,x\otimes f])x&=[L_{n+1}(A),x\otimes f]x+[A,L_{n+1}(x\otimes f])x\\
 &\ \ +\sum_{\substack{i+j=n+1 \\ 0<i,j<n+1}}[L_i(A), L_j(x\otimes f)]x\\
 &=[L_{n+1}(A),x\otimes f]x+[A,\Delta(\mathbf{T})_{K n+1}(x\otimes f])x\\
 &\ \ +\sum_{\substack{i+j=n+1 \\ 0<i,j<n+1}}[\Delta(\mathbf{T})_{K i}(A), \Delta(\mathbf{T})_{K j}(x\otimes f)] x
\end{aligned}
$$for all $x \in K$. From the last two relations we obtain
$$\begin{aligned}
& (L_{n+1}(A)-\Delta(\mathbf{T})_{K n+1}(A))x\\
&=f((L_{n+1}(A)-\Delta(\mathbf{T})_{K n+1}(A))x)x+S_{n+1}([A,x\otimes f])x
\end{aligned}$$
for all $x \in K$. It follows from Step 1 that $S_{n+1}([A,x\otimes f])x$ is a scalar multiple $x$.
Consequently, $(L_{n+1}(A)-\Delta(\mathbf{T})_{K n+1}(A))x$ is also a scalar multiple of $x$.
Namely,
there exists a scalar $h_{Kn+1}(A)$ such that
$$L_{n+1}(A) x= (\Delta(\mathbf{T})_{K n+1}(A)+ h_{Kn+1}(A)I_K)x$$ holds for all $A\in \mathrm{Alg}\mathcal{L} $ and all $x\in K$. Moreover, it is also easy
 to check that $h_{Kn+1} $ is linear and $h_{Kn+1}([A,B]) = 0$ for any $A,B \in\mathrm{Alg}\mathcal{L}$ with $AB=0$.
\end{proof}

Using the above theorem an immediate corollary is the following theorem form \cite{Qi}.
\begin{corollary}\label{xxsec2.c1}
Let $\mathcal{L}$ be a $\mathcal{J}$-subspace lattice on a Banach space $X$ over the
real or complex field $\mathbb{F}$ and $ \mathrm{Alg}\mathcal{L}$ be the associated $\mathcal{J}$-subspace
lattice algebra. Then a linear mapping $\delta: \mathrm{Alg}\mathcal{L}\rightarrow \mathrm{Alg}\mathcal{L}$ satisfies
$$
\delta([A, B])=[\delta(A), B]+[A, \delta(B)]
$$ for any $A, B\in\mathrm{Alg}\mathcal{L}$ with $AB = 0$ if and
only if, for each $K \in \mathcal{J}(\mathcal{L})$, there exist an operator $T_K\in \mathcal{B}(K)$ and a  linear functional $h_K: \mathrm{Alg}\mathcal{L}\rightarrow \mathbb{F}$  satisfying $h_{K}([A,B]) = 0$ whenever $AB = 0$ such
that $ \delta(A)x = (T_{K}A-AT_{K}+ h_K(A)I)x$ for all $A \in \mathrm{Alg}\mathcal{L}$  and all $x \in K$.

\end{corollary}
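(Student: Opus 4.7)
The plan is to obtain this corollary as the single-mapping specialization of Theorem \ref{xxsec1.1}. Applying that theorem to the family $L_0 = \mathrm{id}_{\mathrm{Alg}\mathcal{L}}$, $L_1 = \delta$ (the hypothesis on $L_1$ is precisely the hypothesis here), the $n = 1$ conclusion, which is established as Claim 1 in the proof of Theorem \ref{xxsec1.1} and uses only information about $L_1$, yields the desired statement: by the very definition of the inner higher derivation of order one, $\Delta(\mathbf{T})_{K1}(A) = T_{K1}A - AT_{K1}$, so setting $T_K := T_{K1}$ and $h_K := h_{K1}$ gives $\delta(A)x = (T_K A - A T_K + h_K(A)I)x$ for all $x \in K$.

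If one prefers a self-contained derivation, I would simply re-run that Claim 1 argument. First, Lemma \ref{xxsec2.l5} (applied to the trivially extended pair $(\mathrm{id}, \delta)$) promotes the zero-product hypothesis to the global identity $\delta([A, F]) = [\delta(A), F] + [A, \delta(F)]$ for every $A \in \mathrm{Alg}\mathcal{L}$ and every $F \in \mathcal{F}(\mathcal{L})$. In particular the restriction $\delta|_{\mathcal{F}(\mathcal{L})}$ is an honest Lie derivation, so by the standard decomposition \cite[Theorem 3.1]{Lu} one has $\delta(F) = d(F) + \tau(F)$ on $\mathcal{F}(\mathcal{L})$ with $d$ a derivation and $\tau$ central-valued and vanishing on commutators. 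For each $K \in \mathcal{J}(\mathcal{L})$, fix $x_K \in K$ and $f_K \in K_-^{\perp}$ with $f_K(x_K) = 1$ (Lemma \ref{xxsec2.l2}), and define $T_K \colon K \to K$ by $T_K x := d(x \otimes f_K)x_K$. The Leibniz rule applied to $F(x \otimes f_K) = (Fx)\otimes f_K$ then gives $d(F)x = (T_K F - F T_K)x$ on $K$, and boundedness of $T_K$ is handled as in the proof of the Main Theorem of \cite{Lu}.

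The main remaining step, and the only point that requires genuine work, is to promote this identity from $F \in \mathcal{F}(\mathcal{L})$ to a general $A \in \mathrm{Alg}\mathcal{L}$ modulo a scalar. I would expand $\delta([A, y \otimes f])$ in two ways, once through the global Lie-derivation identity above and once through the explicit formula for $d$, to obtain an equation that forces $(\delta(A) - (T_K A - A T_K))y$ to be a scalar multiple of $y$ for every $y \in K$. This scalar defines $h_K(A)$; linearity of $h_K$ is immediate from linearity of $\delta$, and $h_K([A, B]) = 0$ for $AB = 0$ follows directly from the hypothesis on $\delta$. The expected obstacle is thus not conceptual but technical: keeping careful track of the central term $\tau([A, y \otimes f])$ so that it does not pollute the scalar extraction, exactly as in equation~(\ref{xxsec2e15}) of the proof of Theorem \ref{xxsec1.1}.
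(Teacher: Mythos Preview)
Your proposal is correct and matches the paper's approach: the paper offers no proof at all beyond the remark that the corollary is immediate from Theorem~\ref{xxsec1.1}, and you have spelled out precisely how that specialization works (namely, that Claim~1 in the proof of Theorem~\ref{xxsec1.1} already establishes the $n=1$ case using only $L_1=\delta$). Your self-contained re-derivation is also faithful to the Claim~1 argument; the only point left implicit is the trivial ``if'' direction, which the paper likewise treats as obvious.
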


\section{Characterizations of $\xi$-Lie higher derivations by acting on zero
products}

Let $\mathcal{A}$ be an associative algebra over a field $\mathbb{F}$, a binary operation $[A,B]_{\xi}$ = $AB - \xi BA$ is called the \textit{$\xi$-Lie
product} of $A, B\in\mathcal{A}$ (see \cite{QiHou}). Recall that a linear mapping
$L: \mathcal{A}\rightarrow \mathcal{A}$ is called a \textit{$\xi$-Lie derivation} if $L([A,B]_{\xi}) = [L(A),B]_{\xi} + [A,L(B)]_{\xi}$
for all $A, B\in\mathcal{A} $.
In this section, we will give the characterization of $\xi$-Lie higher derivations with $\xi\neq 1$
on $\mathcal{J}$-subspace
lattice algebras by acting on zero products.

The following lemmas will be used in the sequel.

\begin{lemma}\label{xxsec3.l1}
Let $\mathcal{L}$ be a $\mathcal{J}$-subspace lattice on a Banach space $X$ over the
real or complex field $\mathbb{F}$ and $ \mathrm{Alg}\mathcal{L}$ be the associated $\mathcal{J}$-subspace
lattice algebra. Suppose that $\{L_n\}_{n=0}^{\infty}: \mathrm{Alg}\mathcal{L}\rightarrow \mathrm{Alg}\mathcal{L}$ is a family of linear mappings satisfying
$$
L_n([A, B]_{\xi})=\sum_{i+j=n}[L_i(A), L_j(B)]_{\xi}
$$ for any $A, B\in\mathrm{Alg}\mathcal{L}$ with $AB = 0$.  Then for each $n\in \mathbb{N}$, $L_n(0)=0$.
\end{lemma}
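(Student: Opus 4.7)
The statement is, at heart, an immediate consequence of the linearity of $L_n$: writing $L_n(0) = L_n(0+0) = L_n(0) + L_n(0)$ and cancelling forces $L_n(0) = 0$. So the shortest plan is just to observe this one-line argument. There is no essential obstacle; the lemma reads as a placeholder / warm-up normalising the sequence before the substantive results of Section~3.

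If one prefers a proof that visibly exercises the hypothesised $\xi$-Lie higher derivation identity (to set the template for the later lemmas of the section), the natural alternative is strong induction on $n$. The pair $(A,B) = (0,0)$ trivially satisfies $AB = 0$, so the defining relation yields
\begin{equation*}
L_n(0) \;=\; L_n([0,0]_\xi) \;=\; \sum_{i+j=n}[L_i(0),\,L_j(0)]_\xi.
\end{equation*}
For the base case $n = 0$, the convention $L_0 = \mathrm{id}_{\mathrm{Alg}\mathcal{L}}$ from Section~1 gives $L_0(0) = 0$ directly. For $n \geq 1$, the inductive hypothesis kills every term with $0 < i,j < n$, while the two boundary contributions $[L_0(0), L_n(0)]_\xi$ and $[L_n(0), L_0(0)]_\xi$ vanish because $L_0(0) = 0$ makes each $\xi$-bracket equal to $0$. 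Hence the right-hand side is zero and $L_n(0) = 0$.

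The only point worth flagging is that the second route uses the convention $L_0 = \mathrm{id}$ (introduced with the original definition of a higher derivation in Section~1 but not restated in the lemma); if one chooses not to assume this, the bare linearity argument from the first paragraph suffices without modification. In neither case is there any real difficulty, and the content of the lemma is simply to record $L_n(0) = 0$ so that subsequent computations in Section~3 may expand $L_n([A,B]_\xi)$ or related expressions without carrying a nuisance constant.
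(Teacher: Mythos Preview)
Your proposal is correct, and your second (inductive) route is essentially the paper's own proof: the paper takes $A=B=0$, uses the $\xi$-Lie identity to write $L_k(0)=\sum_{0<i,j<k}[L_i(0),L_j(0)]_\xi$, and kills the right-hand side by strong induction (with the base case $k=1$ handled by $L_0(0)=0$).

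Your first route, the one-line linearity argument $L_n(0)=L_n(0)+L_n(0)$, is a genuinely simpler alternative that the paper does not use. It sidesteps the induction entirely and does not even need the convention $L_0=\mathrm{id}$ or the $\xi$-Lie hypothesis; since the lemma explicitly assumes each $L_n$ is linear, this is the cleaner proof. The paper's inductive route has the minor expository advantage of rehearsing the identity at $A=B=0$ before the heavier manipulations of Proposition~\ref{xxsec3.p1}, but mathematically your linearity observation is all that is required.
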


\begin{proof} In the case of $k=1$, it is easy to check that $L_1(0) = [L_1(0),0]_{\xi} + [0,L_1(0)]_{\xi}=0$. Let $s\in \mathbb{N}$ with $s\geq 1$. Assume that the lemma is true for
all $s<k$. Then by the induction hypothesis we assert
$$
L_k(0)=L_k([0, 0]_{\xi})=\sum_{\substack{i+j=k \\ 0<i,j<k}}[L_i(0), L_j(0)]_{\xi}=0.
$$
\end{proof}

\begin{lemma}\cite[Theorem 3.1]{Qi}\label{xxsec3.l2}
Let $L$ be a $\mathcal{J}$-subspace lattice on a Banach space $X$ over the real or complex field $\mathbb{F}$. Suppose that $L : \mathrm{Alg}\mathcal{L}\rightarrow \mathrm{Alg}\mathcal{L}$ is a linear mapping and $1\neq \xi \in \mathbb{F}$. Then $L$ satisfies $L([A,B]_{\xi}) = [L(A),B]_{\xi} + [A,L(B)]_{\xi}$ whenever
$ A, B \in  \mathrm{Alg}\mathcal{L}$ with $AB = 0$ if and only if one of the following statements holds.
\begin{enumerate}
\item  $\xi\neq 0$, L is a derivation of $\mathrm{Alg}\mathcal{L}$.

\item $\xi= 0, $ $L(I) \in Z(\mathrm{Alg}\mathcal{L})$ and there exists a linear derivation $\delta : \mathrm{Alg}\mathcal{L}\rightarrow \mathrm{Alg}\mathcal{L}$ such that $L(A) = \delta(A) + L(I)A$ for all $ A \in  \mathrm{Alg}\mathcal{L}$. That is, $L$
is a generalized derivation of $\mathrm{Alg}\mathcal{L}$ with associated derivation $\delta$.
\end{enumerate}
\end{lemma}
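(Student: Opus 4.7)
The plan is to separate the two cases on $\xi$ and, in each, treat the easy ``if'' direction by direct computation before turning to the ``only if'' direction, where the $\mathcal{J}$-subspace lattice structure supplies enough rank-one idempotents (Lemmas \ref{xxsec2.l1}--\ref{xxsec2.l4}) to reduce the problem to the action of $L$ on rank-one operators. The ``if'' direction requires only unpacking definitions: in case (1), any derivation $L$ satisfies the $\xi$-Lie identity for \emph{all} pairs, not just zero products; in case (2), writing $L(A) = \delta(A) + L(I)A$ with $L(I) \in Z(\mathrm{Alg}\mathcal{L})$ and $\delta$ a derivation, when $AB=0$ one checks that both $L(AB)$ and $L(A)B + AL(B)$ reduce to $0$ by centrality of $L(I)$.

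For the ``only if'' direction in case (1) with $\xi \neq 0,1$, the key is to test the hypothesis on the pairs $(P, I-P)$ and $(I-P, P)$ for an arbitrary idempotent $P \in \mathrm{Alg}\mathcal{L}$, both pairs having zero product. Subtracting the two resulting identities collapses most terms and leaves $(1+\xi)[P, L(I)]=0$, so that for $\xi \neq -1$ the operator $L(I)$ commutes with every idempotent. Lemma \ref{xxsec2.l3} then propagates this to all rank-one elements of $\mathrm{Alg}\mathcal{L}$, and Lemmas \ref{xxsec2.l1}--\ref{xxsec2.l2} combined with the JSL axiom $\bigvee \mathcal{J}(\mathcal{L}) = X$ force $L(I) \in Z(\mathrm{Alg}\mathcal{L})$. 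Adding the two idempotent identities and using centrality of $L(I)$ then yields $L(P) = L(P)P + PL(P) - PL(I)$; testing this on rank-one idempotents $P = x\otimes f$ and invoking the JSL axioms once more pins down $L(I) = 0$. The relation $L(P) = L(P)P + PL(P)$ for all idempotents, together with Lemmas \ref{xxsec2.l3} and \ref{xxsec2.l4}, extends to the derivation identity on $\mathcal{F}(\mathcal{L})$, and a boundedness/extension argument in the spirit of \cite{Lu} then promotes this to a derivation on all of $\mathrm{Alg}\mathcal{L}$.

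For case (2) with $\xi = 0$ the hypothesis reads $L(A)B + AL(B) = 0$ whenever $AB = 0$, the classical ``derivation at zero products'' condition. A parallel idempotent computation with $\xi = 0$ again forces $L(I) \in Z(\mathrm{Alg}\mathcal{L})$, and one sets $\delta(A) := L(A) - L(I)A$. The linear mapping $\delta$ inherits the zero-product condition and now satisfies $\delta(I) = 0$; from here one shows by the same rank-one/idempotent analysis that $\delta(AB) = \delta(A)B + A\delta(B)$ first on products involving a rank-one operator, then on $\mathcal{F}(\mathcal{L})$, and finally on $\mathrm{Alg}\mathcal{L}$ via the same extension argument, producing the desired decomposition of $L$.

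The principal obstacle is the exceptional Jordan value $\xi = -1$, where the subtraction trick yielding $(1+\xi)[P, L(I)] = 0$ degenerates. In that case one must instead exploit the alternate nilpotent products $(x\otimes f)^2 = 0$ for $f(x) = 0$, together with Lemma \ref{xxsec2.l2} to produce auxiliary functionals, in order to pry out both the centrality and then the vanishing of $L(I)$. The secondary technical hurdle, common to both cases, is the transfer of the derivation property from the ideal $\mathcal{F}(\mathcal{L})$ of finite-rank operators to the whole algebra $\mathrm{Alg}\mathcal{L}$; this requires boundedness arguments modeled on the main theorem of \cite{Lu}, essentially the same machinery invoked inside the proof of Theorem \ref{xxsec1.1}.
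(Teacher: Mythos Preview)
The paper does not supply its own proof of this lemma: it is quoted verbatim as \cite[Theorem 3.1]{Qi} and used as a black box in the subsequent arguments. Consequently there is no in-paper proof to compare your proposal against; your outline is an attempt to reconstruct the argument from the cited reference rather than to match anything written here.

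That said, your sketch follows the expected contours of such a proof and is broadly reasonable, with one point worth flagging. In the step where you claim that testing on rank-one idempotents ``pins down $L(I)=0$'' in case (1), you should be explicit about how the relation $L(P)=L(P)P+PL(P)-PL(I)$ for a rank-one idempotent $P=x\otimes f$ actually forces $L(I)=0$ rather than merely $L(I)\in Z(\mathrm{Alg}\mathcal{L})$; this step typically requires combining the idempotent identity with a second application of the hypothesis on a suitably chosen zero product, and is where $\xi\neq 1$ is genuinely used. Likewise, your treatment of the exceptional value $\xi=-1$ via nilpotent rank-one operators is the right idea but is only gestured at; this is precisely the delicate case and would need a full argument. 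Since the paper itself defers all of this to \cite{Qi}, these are gaps relative to a complete proof rather than discrepancies with the paper.
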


The following proposition will permit us to transfer the problem of $\xi$-Lie higher derivation with $\xi\neq 1$ by acting on zero products on $\mathcal{J}$-subspace lattice algebras into the same problems related to the corresponding $\xi$-Lie derivation.

\begin{proposition}\label{xxsec3.p1}
Let $\mathcal{L}$ be a $\mathcal{J}$-subspace lattice on a Banach space $X$ over the
real or complex field $\mathbb{F}$ and $ \mathrm{Alg}\mathcal{L}$ be the associated $\mathcal{J}$-subspace
lattice algebra. Suppose that $\{L_n\}_{n=0}^{\infty}: \mathrm{Alg}\mathcal{L}\rightarrow \mathrm{Alg}\mathcal{L}$ is a family of linear mappings satisfying
$$
L_n([A, B]_{\xi})=\sum_{i+j=n}[L_i(A), L_j(B)]_{\xi}
$$ whenever $A, B\in\mathrm{Alg}\mathcal{L}$ with $AB=0$.  Then there is a family of linear mappings
$\{\delta_n\}_{n=0}^\infty$ satisfying $\delta_n([A, B]_{\xi})=[\delta_n(A), B]_{\xi}+[A, \delta_n(B)]_{\xi}$ for any $A, B \in \mathrm{Alg}\mathcal{L}$ with $AB= 0$ such that
$$
(n+1)L_{n+1}=\sum_{k=0}^n\delta_{k+1}L_{n-k}
$$
for each non-negative integer $n$.
\end{proposition}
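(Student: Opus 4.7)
The plan is to mirror the inductive scheme of Proposition \ref{xxsec2.p1}, replacing the Lie bracket by the $\xi$-bracket and swapping the composition order in the recursion. I would induct on $n$. For the base case, take $\delta_1 := L_1$; the hypothesis at $n=1$ (with $L_0 = \mathrm{id}$) reads $L_1([A,B]_\xi) = [L_1(A),B]_\xi + [A,L_1(B)]_\xi$ whenever $AB=0$, so $\delta_1$ is a $\xi$-Lie derivation on zero products and the recursion $L_1 = \delta_1 L_0$ is immediate.

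For the inductive step, suppose $\delta_1,\ldots,\delta_n$ have been constructed as $\xi$-Lie derivations on zero products satisfying $(m+1)L_{m+1} = \sum_{k=0}^{m}\delta_{k+1}L_{m-k}$ for every $m \leq n-1$. I would then \emph{define}
$$\delta_{n+1} := (n+1)L_{n+1} - \sum_{k=0}^{n-1}\delta_{k+1}L_{n-k},$$
so that the recursion at level $n$ holds automatically, and it remains to verify that $\delta_{n+1}([A,B]_\xi) = [\delta_{n+1}(A),B]_\xi + [A,\delta_{n+1}(B)]_\xi$ whenever $AB = 0$. The crucial leverage is Lemma \ref{xxsec3.l2}: the induction hypothesis upgrades each $\delta_{k+1}$ (for $k \leq n-1$) from a $\xi$-Lie derivation on zero products to an honest derivation of $\mathrm{Alg}\mathcal{L}$ (when $\xi \neq 0$) or a generalized derivation (when $\xi = 0$), so in either case it can be expanded freely across products.

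Taking $A,B$ with $AB = 0$ and applying the hypothesis to $L_{n+1}$ and each $L_{n-k}$ yields
$$\delta_{n+1}([A,B]_\xi) = (n+1)\!\!\sum_{i+j=n+1}\![L_i(A),L_j(B)]_\xi \; - \; \sum_{k=0}^{n-1}\sum_{i+j=n-k}\delta_{k+1}([L_i(A),L_j(B)]_\xi).$$
I would then push each $\delta_{k+1}$ through the inner bracket: for $\xi\neq 0$ this is immediate from the derivation property, while for $\xi = 0$ the generalized-derivation expansion leaves an additional term $\delta_{k+1}(I)\sum_{i+j=n-k}L_i(A)L_j(B)$, which vanishes since $\sum_{i+j=n-k}L_i(A)L_j(B) = L_{n-k}(AB) = L_{n-k}(0) = 0$ by Lemma \ref{xxsec3.l1}. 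What remains is a triple-sum reindexing — substituting $r = k + i$, isolating the extreme $i=0$ and $j=0$ contributions, and collapsing the cross terms via the inductive recursion $(r+1)L_{r+1} = \sum_{s=0}^{r}\delta_{s+1}L_{r-s}$ — which should assemble the expression into precisely $[\delta_{n+1}(A),B]_\xi + [A,\delta_{n+1}(B)]_\xi$. The principal obstacle is this index bookkeeping, which parallels the calculation in Proposition \ref{xxsec2.p1} but with slightly different ranges because of the reversed composition order; the secondary subtlety is the centrality-based cancellation needed in the $\xi = 0$ case just described.
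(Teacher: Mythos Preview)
Your proposal is correct and follows essentially the same inductive scheme as the paper's proof: define $\delta_{n+1}$ by the recursion, split into the cases $\xi\neq 0$ and $\xi=0$, use Lemma~\ref{xxsec3.l2} to upgrade the earlier $\delta_{k+1}$ to (generalized) derivations, and collapse the resulting double sums via the substitution $r=k+i$ exactly as in Proposition~\ref{xxsec2.p1}. Your handling of the $\xi=0$ correction term---observing that $\sum_{i+j=n-k}L_i(A)L_j(B)=L_{n-k}(AB)=L_{n-k}(0)=0$ so that the $\delta_{k+1}(I)$-contribution vanishes---is precisely the mechanism the paper uses (there phrased as passing from $\delta_{k+1}$ to its associated derivation $\tau_{k+1}$ and back), and relies on the centrality of $\delta_{k+1}(I)$ guaranteed by Lemma~\ref{xxsec3.l2}.
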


\begin{proof}
Let us prove it by induction on the index $n$. If $n=0$, then
$$
L_1([A, B]_{\xi})=[L_1(A), L_0(B)]_{\xi}+[L_0(A),L_1(B)]_{\xi}=[L_1(A), B]_{\xi}+[A,
L_1(B)]_{\xi}
$$
for all $A, B\in \mathrm{Alg}\mathcal{L}$ with $AB= 0$. Let us set $\delta_1=L_1$. Then
$\delta_1([A, B]_{\xi})=[\delta_1(A), B]_{\xi}+[A, \delta_1(B)]_{\xi}$ for any $A, B \in \mathrm{Alg}\mathcal{L}$ with $AB= 0$.

We now suppose that $\delta_k$ is a well-established mapping of $ \mathrm{Alg}\mathcal{L}$ for each $k\leq n$. Define
$$
\delta_{n+1}=(n+1)L_{n+1}-\sum_{k=0}^{n-1}\delta_{k+1}L_{n-k}.
$$
It is sufficient to show that $\delta_{n+1}: \mathrm{Alg}\mathcal{L}\rightarrow \mathrm{Alg}\mathcal{L}$ satisfying $$\delta_{n+1}([A, B]_{\xi})=[\delta_{n+1}(A), B]_{\xi}+[A, \delta_{n+1}(B)]_{\xi}$$ for any $A, B \in \mathrm{Alg}\mathcal{L}$ with $AB= 0$.

\textbf{Case 1.} $\xi\neq 0$.

For arbitrary elements $A, B \in \mathrm{Alg}\mathcal{L}$ with $AB= 0$, we have
$$
\begin{aligned}
\delta_{n+1}([A,B]_{\xi})&=(n+1)L_{n+1}([A,B]_{\xi})-\sum_{k=0}^{n-1}\delta_{k+1}L_{n-k}([A,B]_{\xi})\\
&=(n+1)\sum_{k=0}^{n+1}[L_k(A),
L_{n+1-k}(B)]_{\xi}\\
& \hspace{10pt}-\sum_{k=0}^{n-1}\delta_{k+1}\left(\sum_{i=0}^{n-k}[L_i(A),
L_{n-k-i}(B)]_{\xi}\right).
\end{aligned}
$$

Using the induction hypothesis and Lemma \ref{xxsec3.l2}, we obtain
$$
\begin{aligned}
\delta_{n+1}([A, B]_{\xi})&=\sum_{k=0}^{n+1}(k+n+1-k)[L_k(A),
L_{n+1-k}(B)]_{\xi}-\\
&\hspace{10pt}\sum_{k=0}^{n-1}\delta_{k+1}\left(\sum_{i=0}^{n-k}[L_i(A), L_{n-k-i}(B)]_{\xi}\right)\\
&=\sum_{k=0}^{n+1}k[L_k(A),
L_{n+1-k}(B)]_{\xi}-\sum_{k=0}^{n-1}\sum_{i=0}^{n-k}[\delta_{k+1}(L_i(A)),
L_{n-k-i}(B)]_{\xi}\\
&\hspace{10pt}+\sum_{k=0}^{n+1}(n+1-k)[L_k(A),
L_{n+1-k}(B)]_{\xi}-\sum_{k=0}^{n-1}\sum_{i=0}^{n-k}[L_i(A),
\delta_{k+1}(L_{n-k-i}(B))]_{\xi} .
\end{aligned}
$$
Let us write
$$
\begin{aligned}
U&=\sum_{k=0}^{n+1}k[L_k(A), L_{n+1-k}(B)]_{\xi}-\sum_{k=0}^{n-1}\sum_{i=0}^{n-k}[\delta_{k+1}(L_i(A)), L_{n-k-i}(B)]_{\xi},\\
V&=\sum_{k=0}^{n+1}(n+1-k)[L_k(A),
L_{n+1-k}(B)]_{\xi}-\sum_{k=0}^{n-1}\sum_{i=0}^{n-k}[L_i(A),
\delta_{k+1}(L_{n-k-i}(B))]_{\xi}.
\end{aligned}
$$
Then $\delta_{n+1}([A,B])=U+V$. In the expression of sum $\sum_{k=0}^{n-1}\sum_{i=0}^{n-k}$, we notice that $k\neq n$ and
$0\leq k+i\leq n$. If we set $r=k+i$, then
$$\begin{aligned}
U&=\sum_{k=0}^{n+1}k[L_k(A), L_{n+1-k}(B)]_{\xi}-\sum_{r=0}^{n}\sum_{0\leq k\leq r,k\neq n}[\delta_{k+1}(L_{r-k}(A)),L_{n-r}(B)]_{\xi}\\
&=\sum_{r=0}^{n}(r+1)[L_{r+1}(A),
L_{n-r}(B)]_{\xi}-\sum_{r=0}^{n-1}\sum_{k=0}^{r}[\delta_{k+1}(L_{r-k}(A)),
L_{n-r}(B)]_{\xi}\\
&\hspace{10pt}- \sum_{k=0}^{n-1}[\delta_{k+1}(L_{n-k}(A)), B]_{\xi}\\
&=\sum_{r=0}^{n-1}[(r+1)L_{r+1}(A)-\sum_{k=0}^{r}\delta_{k+1}(L_{r-k}(A)),L_{n-r}(B)]_{\xi}\hspace{45pt}\\
&\quad +(n+1)[L_{n+1}(A),
B]_{\xi}-\sum_{k=0}^{n-1}[\delta_{k+1}(L_{n-k}(A)), B]_{\xi}.
\end{aligned}$$
By the induction hypothesis $
(r+1)L_{r+1}(A)=\sum_{k=0}^{r}\delta_{k+1}(L_{r-k}(A))
$
for $r=0,\cdots, n-1$, we get
$$
U=[(n+1)L_{n+1}(A)-\sum_{k=0}^{n-1}\delta_{k+1}(L_{n-k}(A)),
B]_{\xi}=[\delta_{n+1}(A), B]_{\xi}.
$$
Similarly, one can deduce that $V=[A, \delta_{n+1}(B)]_{\xi}$. We therefore conclude
$$
\delta_{n+1}([A, B]_{\xi})=U+V=[\delta_{n+1}(A), B]_{\xi}+[A, \delta_{n+1}(B)]_{\xi}
$$
for any $A, B \in \mathrm{Alg}\mathcal{L}$ with $AB=0$, which is the desired result.

\textbf{Case 2.} $\xi=0$.

Note that the sequence $\{\delta_{k+1}\}_{k=0}^{n-1}$ is a family of generalized derivations by Lemma \ref{xxsec3.l2}. That is, for an arbitrary $k=0, 1,\cdots, n-1$, $\delta_{k+1}(A)=\tau_{k+1}(A)+\delta_{k+1}(I)(A)$ for all $A \in \mathrm{Alg}\mathcal{L}$.
Hence we have
$$
\begin{aligned}
\delta_{n+1}([A,B]_{\xi})&=(n+1)L_{n+1}([A,B]_{\xi})-\sum_{k=0}^{n-1}\delta_{k+1}L_{n-k}([A,B]_{\xi})\\
&=(n+1)\sum_{k=0}^{n+1}[L_k(A),
L_{n+1-k}(B)]_{\xi}-\sum_{k=0}^{n-1}\delta_{k+1}\left(\sum_{i=0}^{n-k}[L_i(A),
L_{n-k-i}(B)]_{\xi}\right)
\end{aligned}
$$
for any $A, B \in \mathrm{Alg}\mathcal{L}$ with $AB=0$. In view of Lemma \ref{xxsec3.l1} we know that $L_{n-k}([A,B]_{\xi})=0$ in the above relation. Using the induction hypothesis one can compute
$$
\begin{aligned}
\delta_{n+1}([A, B]_{\xi})&=\sum_{k=0}^{n+1}(k+n+1-k)[L_k(A),
L_{n+1-k}(B)]_{\xi}\\
&\hspace{10pt}-\sum_{k=0}^{n-1}\tau_{k+1}\left(\sum_{i=0}^{n-k}[L_i(A), L_{n-k-i}(B)]_{\xi}\right)\\
&=\sum_{k=0}^{n+1}k[L_k(A),
L_{n+1-k}(B)]_{\xi}-\sum_{k=0}^{n-1}\sum_{i=0}^{n-k}[\tau_{k+1}(L_i(A)),
L_{n-k-i}(B)]_{\xi}\\
&\hspace{10pt}+\sum_{k=0}^{n+1}(n+1-k)[L_k(A),
L_{n+1-k}(B)]_{\xi}-\sum_{k=0}^{n-1}\sum_{i=0}^{n-k}[L_i(A),
\tau_{k+1}(L_{n-k-i}(B))]_{\xi}\\
&=\sum_{k=0}^{n+1}k[L_k(A),
L_{n+1-k}(B)]_{\xi}-\sum_{k=0}^{n-1}\sum_{i=0}^{n-k}[\delta_{k+1}(L_i(A)),
L_{n-k-i}(B)]_{\xi}\\
&\hspace{10pt}+\sum_{k=0}^{n+1}(n+1-k)[L_k(A),
L_{n+1-k}(B)]_{\xi}-\sum_{k=0}^{n-1}\sum_{i=0}^{n-k}[L_i(A),
\delta_{k+1}(L_{n-k-i}(B))]_{\xi}
\end{aligned}
$$
for any $A, B \in \mathrm{Alg}\mathcal{L}$ with $AB= 0$. If we set
$$
U=\sum_{k=0}^{n+1}k[L_k(A), L_{n+1-k}(B)]_{\xi}-\sum_{k=0}^{n-1}\sum_{i=0}^{n-k}[\delta_{k+1}(L_i(A)), L_{n-k-i}(B)]_{\xi}$$
and
$$
V=\sum_{k=0}^{n+1}(n+1-k)[L_k(A),
L_{n+1-k}(B)]_{\xi}-\sum_{k=0}^{n-1}\sum_{i=0}^{n-k}[L_i(A),
\delta_{k+1}(L_{n-k-i}(B))]_{\xi}.
$$
Then $\delta_{n+1}([A,B])=U+V$ for any $A, B \in \mathrm{Alg}\mathcal{L}$ with $AB= 0$. Similarly, we can show that
$U=[\delta_{n+1}(A), B]_{\xi}$ and $V=[A, \delta_{n+1}(B)]_{\xi}$. Thus we have
$$
\delta_{n+1}([A, B]_{\xi})=P+Q=[\delta_{n+1}(A), B]_{\xi}+[A, \delta_{n+1}(B)]_{\xi}
$$
for any $A, B \in \mathrm{Alg}\mathcal{L}$ with $AB= 0$.
\end{proof}

Now we are in a position to state the main theorem of this section.

\begin{theorem}\label{xxsec3t1}
Let $\mathcal{L}$ be a $\mathcal{J}$-subspace lattice on a Banach space $X$ over the
real or complex field $\mathbb{F}$  and $ \mathrm{Alg}\mathcal{L}$ be the associated $\mathcal{J}$-subspace
lattice algebra. Let $\xi\in \mathbb{F}$ with $\xi\neq1$. Suppose that $\mathcal{G}$ is the
set of all $\xi$-Lie higher derivations on $ \mathrm{Alg}\mathcal{L}$ by acting on zero products and $\mathcal{H}$
be the set of all sequences of $\xi$-Lie derivations on $ \mathrm{Alg}\mathcal{L}$ by acting on zero products
with first component zero. Then there is a one-to-one correspondence
between $\mathcal{G}$ and $\mathcal{H}$.
\end{theorem}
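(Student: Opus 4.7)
The plan is to construct explicit mutually inverse maps between $\mathcal{G}$ and $\mathcal{H}$ by exploiting the recursion in Proposition \ref{xxsec3.p1}. Define $\Phi:\mathcal{G}\rightarrow\mathcal{H}$ by sending $\{L_n\}_{n=0}^{\infty}\in\mathcal{G}$ to the sequence $\{\delta_n\}_{n=0}^{\infty}$ produced by Proposition \ref{xxsec3.p1}, with the convention $\delta_0=0$. That proposition guarantees that each $\delta_n$ ($n\geq 1$) is a $\xi$-Lie derivation by acting on zero products, so the image genuinely lies in $\mathcal{H}$.

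For the reverse direction, given $\{\delta_n\}_{n=0}^{\infty}\in\mathcal{H}$, define $\Psi(\{\delta_n\})=\{L_n\}_{n=0}^{\infty}$ by setting $L_0=\mathrm{id}_{\mathrm{Alg}\mathcal{L}}$ and recursively
\begin{equation}\label{xxsec3inv}
L_{n+1}=\frac{1}{n+1}\sum_{k=0}^{n}\delta_{k+1}L_{n-k}\qquad(n\geq 0),
\end{equation}
which makes sense since $\mathbb{F}\in\{\mathbb{R},\mathbb{C}\}$ has characteristic zero. Each $L_n$ is linear by construction. Observe that rearranging (\ref{xxsec3inv}) gives $\delta_{n+1}=(n+1)L_{n+1}-\sum_{k=0}^{n-1}\delta_{k+1}L_{n-k}$, which is precisely the recursion appearing in the proof of Proposition \ref{xxsec3.p1}. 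Hence $\Phi\circ\Psi=\mathrm{id}_{\mathcal{H}}$ and $\Psi\circ\Phi=\mathrm{id}_{\mathcal{G}}$ are immediate from the uniqueness of the sequences determined by the common recursion.

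The essential task, and the main obstacle, is to verify that $\Psi$ really lands in $\mathcal{G}$, i.e.\ that the family $\{L_n\}$ defined by (\ref{xxsec3inv}) satisfies
\[
L_n([A,B]_{\xi})=\sum_{i+j=n}[L_i(A),L_j(B)]_{\xi}
\]
for all $A,B\in\mathrm{Alg}\mathcal{L}$ with $AB=0$. I would prove this by induction on $n$; the cases $n=0,1$ are immediate. For the inductive step, apply $L_{n+1}$ to $[A,B]_{\xi}$ via (\ref{xxsec3inv}) to obtain
\[
(n+1)L_{n+1}([A,B]_{\xi})=\sum_{k=0}^{n}\delta_{k+1}\bigl(L_{n-k}([A,B]_{\xi})\bigr).
\]
By Lemma \ref{xxsec3.l2}, each $\delta_{k+1}$ is a genuine derivation (when $\xi\neq 0$) or a generalized derivation (when $\xi=0$) of \emph{all} of $\mathrm{Alg}\mathcal{L}$, so in both cases it satisfies the $\xi$-Lie Leibniz identity $\delta_{k+1}([X,Y]_{\xi})=[\delta_{k+1}(X),Y]_{\xi}+[X,\delta_{k+1}(Y)]_{\xi}$ with no zero-product restriction. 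Substituting the inductive hypothesis for $L_{n-k}([A,B]_{\xi})$, applying this Leibniz identity termwise, and reorganizing the resulting double sum via the change of index $r=k+i$ — the same bookkeeping that appears in the proof of Proposition \ref{xxsec3.p1}, run in reverse — collects the terms into $(n+1)\sum_{i+j=n+1}[L_i(A),L_j(B)]_{\xi}$, completing the induction.

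The case $\xi=0$ requires extra care because $[\cdot,\cdot]_0$ is not antisymmetric and each $\delta_{k+1}$ is only a generalized derivation with central $\delta_{k+1}(I)$ part. Here one uses Lemma \ref{xxsec3.l1} (so $L_{n-k}(AB)=0$ whenever $AB=0$) together with the fact that the extra $\delta_{k+1}(I)$-terms contributed by the generalized derivation identity pair up and cancel when the double sum is collected, exactly as in Case 2 of the proof of Proposition \ref{xxsec3.p1}. Once this verification is completed, $\Phi$ and $\Psi$ are mutually inverse bijections, establishing the stated one-to-one correspondence.
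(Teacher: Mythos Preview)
Your proposal is correct and follows essentially the same route as the paper: define the forward map via Proposition \ref{xxsec3.p1}, define the inverse by the recursion \eqref{xxsec3inv}, and verify by induction (splitting into the cases $\xi\neq 0$ and $\xi=0$ and invoking Lemma \ref{xxsec3.l2}) that the resulting family is a $\xi$-Lie higher derivation on zero products. One small wording issue: your claim that ``in both cases'' $\delta_{k+1}$ satisfies the $\xi$-Lie Leibniz identity unrestrictedly is false for $\xi=0$ (a generalized derivation does not satisfy $\delta(XY)=\delta(X)Y+X\delta(Y)$), but your subsequent paragraph correctly repairs this via Lemma \ref{xxsec3.l1} and the vanishing of the $\delta_{k+1}(I)$-terms, exactly as the paper does.
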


\begin{proof}
It follows from Proposition \ref{xxsec3.p1} that for an arbitrary
 $G=\{L_n\}_{n=0}^{\infty}\in\mathcal{G}$ on $ \mathrm{Alg}\mathcal{L}$ satisfying $$
L_n([A, B]_{\xi})=\sum_{i+j=n}[L_i(A), L_j(B)]_{\xi}
$$ whenever $A, B\in\mathrm{Alg}\mathcal{L}$ with $AB = 0$, there is a sequence
$D=\{\delta_n\}_{n=0}^\infty$ of linear maps satisfying $\delta_n([A, B]_{\xi})=[\delta_n(A), B]_{\xi}+[A, \delta_n(B)]_{\xi}$ for any $A, B \in \mathrm{Alg}\mathcal{L}$ with $AB= 0$ on $\mathrm{Alg}\mathcal{L}$ with $\delta_0=0$ such that
$$
(n+1)L_{n+1}=\sum_{k=0}^n \delta_{k+1}L_{n-k}
$$
for each non-negative integer $n$. Hence the following mapping
$$
\begin{aligned}
\varphi: \mathcal{G} &\longrightarrow \mathcal{H} \\
\{L_n\}_{n=0}^{\infty}=G &\longmapsto
D=\{\delta_n\}_{n=0}^{\infty}
\end{aligned}
$$
is well-defined. Note that the solution of the recursive
relation of Proposition \ref{xxsec3.p1} is unique. Therefore
$\varphi$ is injective.

We next prove that $\varphi$ is also surjective. For a given
sequence $D=\{\delta_n\}_{n=0}^\infty$ of Lie derivations with
$\delta_0=0$, one can define $L_0=I$ and
$$
(n+1)L_{n+1}=\sum_{k=0}^n \delta_{k+1}L_{n-k}
$$
for each $n$. It is sufficient to show
that $G=\{L_n\}_{n=0}^{\infty}$ is a $\xi$-Lie higher derivation on $ \mathrm{Alg}\mathcal{L}$ by acting on zero products.

Obviously, $L_1=\delta_1$ is a $\xi$-Lie derivation on $ \mathrm{Alg}\mathcal{L}$ by acting on zero products. Assume that $L_k([A,B]_{\xi})=\sum_{i=0}^k[L_i(A),
L_{k-i}(B)]_{\xi}$ for any $A,B\in \mathrm{Alg}\mathcal{L}$ with $AB=0$ and for each $k\leq n$.
Note that
$$\begin{aligned}
(n+1)L_{n+1}([A,  B]_{\xi})&=\sum_{k=0}^n \delta_{k+1}L_{n-k}([A,  B]_{\xi})\\
&=\sum_{k=0}^n \delta_{k+1}\left(\sum_{i=0}^{n-k}[L_i(A),
L_{n-k-i}( B)]_{\xi}\right)
\end{aligned}$$
for any $A,B\in \mathrm{Alg}\mathcal{L}$ with $AB=0$.

\textbf{Case 1.}  $\xi\neq 0$

Using Lemma \ref{xxsec3.l2} and the induction hypothesis, we compute that
$$
\begin{aligned}
(n+1)L_{n+1}([A, B]_{\xi})&=\sum_{k=0}^n \sum_{i=0}^{n-k}\left\{[\delta_{k+1}(L_i(A)), L_{n-k-i}(B)]_{\xi}+[L_i(A), \delta_{k+1}(L_{n-k-i}(B))]_{\xi}\right\}\\
&=\sum_{i=0}^n[\sum_{k=0}^{n-i}\delta_{k+1}L_{n-i-k}(A), L_i(B)]_{\xi}+
\sum_{i=0}^n[L_i(A), \sum_{k=0}^{n-i}\delta_{k+1}L_{n-i-k}(B)]_{\xi}\\
&=\sum_{i=0}^n[(n-i+1)L_{n-i+1}(A), L_i(B)]_{\xi}+\sum_{i=0}^n[L_i(A), (n-i+1)L_{n-i+1}(B)]_{\xi}\\
&=\sum_{i=1}^{n+1}i[L_i(A), L_{n+1-i}(B)]_{\xi}+\sum_{i=0}^n(n+1-i)[L_i(A), L_{n-i+1}(B)]_{\xi}\\
&=(n+1)\sum_{k=0}^{n+1}[L_k(A), L_{n+1-k}(B)]_{\xi}
\end{aligned}
$$
for any $A,B\in \mathrm{Alg}\mathcal{L}$ with $AB=0$.

\textbf{Case 2.}  $\xi=0$

By Lemma \ref{xxsec3.l2} it follows that $\delta_n$ is a generalized derivation with associated derivation $\tau_n$ for each $n\in \mathbb{N}$. That is, $\delta_n(A)=\tau_n(A)+\delta_n(I)A$ holds for all $A\in \mathrm{Alg}\mathcal{L}$. In an analogous manner, one can show
$$
\begin{aligned}
(n+1)L_{n+1}([A, B]_{\xi})&=\sum_{k=0}^n \sum_{i=0}^{n-k}\left\{[\tau_{k+1}(L_i(A)), L_{n-k-i}(B)]_{\xi}+[L_i(A), \tau_{k+1}(L_{n-k-i}(B))]_{\xi}\right\}\\
&=\sum_{i=0}^n[\sum_{k=0}^{n-i}\tau_{k+1}L_{n-i-k}(A), L_i(B)]_{\xi}+
\sum_{i=0}^n[L_i(A), \sum_{k=0}^{n-i}\tau_{k+1}L_{n-i-k}(B)]_{\xi}\\
&=\sum_{i=0}^n[\sum_{k=0}^{n-i}\delta_{k+1}L_{n-i-k}(A), L_i(B)]_{\xi}+
\sum_{i=0}^n[L_i(A), \sum_{k=0}^{n-i}\delta_{k+1}L_{n-i-k}(B)]_{\xi}\\
&=\sum_{i=0}^n[(n-i+1)L_{n-i+1}(A), L_i(B)]_{\xi}+\sum_{i=0}^n[L_i(A), (n-i+1)L_{n-i+1}(B)]_{\xi}\\
&=\sum_{i=1}^{n+1}i[L_i(A), L_{n+1-i}(B)]_{\xi}+\sum_{i=0}^n(n+1-i)[L_i(A), L_{n-i+1}(B)]_{\xi}\\
&=(n+1)\sum_{k=0}^{n+1}[L_k(A), L_{n+1-k}(B)]_{\xi}
\end{aligned}
$$
for any $A,B\in \mathrm{Alg}\mathcal{L}$ with $AB=0$.

In any case, one can get
$$
\begin{aligned}
L_{n+1}([A, B]_{\xi})=\sum_{k=0}^{n+1}[L_k(A), L_{n+1-k}(B)]_{\xi}
\end{aligned}
$$
for any $A,B\in \mathrm{Alg}\mathcal{L}$ with $AB=0$. This shows that
$G=\{L_n\}_{n=0}^{\infty}$ is a $\xi$-Lie higher derivation of
$\mathrm{Alg}\mathcal{L}$. Thus $G\in \mathcal{G}$ and this completes the proof.
\end{proof}

Before stating the second main result in this section, we need a conclusion that characterizes generalized higher derivations in terms of generalized derivations. 

\begin{lemma}\label{xxsec3.l5}
Let $\mathcal{A}$ be an associative algebra and $G=\{L_n\}_{n=0}^{\infty}$ be a generalized higher derivation with an associated higher derivation
$D=\{d_n\}_{n=0}^{\infty}$.
Then there is a family of generalized derivations $\{\gamma_n\}_{n\in \mathbb{N}}$ with the family of associated derivations $\{\tau_n\}_{n\in \mathbb{N}}$ such that
$$
(n+1)L_{n+1}=\sum_{k=0}^n \gamma_{k+1}L_{n-k},\ \
(n+1)d_{n+1}=\sum_{k=0}^n \tau_{k+1}d_{n-k}$$
for each nonnegative integer $n$.
\end{lemma}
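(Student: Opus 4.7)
The plan is to prove this lemma by an induction on $n$ that closely mirrors the strategy of Proposition~\ref{xxsec2.p1}, with Lie brackets replaced by ordinary products and the generalized-derivation identity $g(ab)=g(a)b+a\tau(b)$ playing the role that the Leibniz rule for $[\,\cdot\,,\,\cdot\,]$ played there. I will construct $\{\gamma_n\}$ and $\{\tau_n\}$ recursively via the stated formulas, and then verify by induction that each $\gamma_{n+1}$ is a generalized derivation whose associated derivation is $\tau_{n+1}$.

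First I would handle the base case. Set $\gamma_1=L_1$ and $\tau_1=d_1$. From the generalized higher derivation identity
$$
L_1(xy)=L_0(x)d_1(y)+L_1(x)d_0(y)=x\tau_1(y)+\gamma_1(x)y,
$$
and the fact that $d_1$ is an ordinary derivation (a standard consequence of $D$ being a higher derivation), $\gamma_1$ is indeed a generalized derivation with associated derivation $\tau_1$. The $\tau$-side recursion $(n{+}1)d_{n+1}=\sum_{k=0}^{n}\tau_{k+1}d_{n-k}$ is a well-known characterization of higher derivations in terms of ordinary derivations (cf.\ the cited paper of Nowicki), which gives both the existence of $\tau_{n+1}$ as a derivation and the identity for the $d_n$'s for free.

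For the inductive step, assume $\gamma_1,\dots,\gamma_n$ and $\tau_1,\dots,\tau_n$ are already constructed with the required properties. Define
$$
\gamma_{n+1}=(n+1)L_{n+1}-\sum_{k=0}^{n-1}\gamma_{k+1}L_{n-k}.
$$
To verify $\gamma_{n+1}(xy)=\gamma_{n+1}(x)y+x\tau_{n+1}(y)$, I would expand $(n+1)L_{n+1}(xy)=\sum_{i+j=n+1}(i+j)L_i(x)d_j(y)$ and split the scalar $n{+}1=i+j$ into two pieces $U$ (carrying the factor $i$) and $V$ (carrying the factor $j$); simultaneously expand each $L_{n-k}(xy)=\sum_{i+j=n-k}L_i(x)d_j(y)$ and, for $k<n$, apply the inductive hypothesis that $\gamma_{k+1}$ is a generalized derivation with associated derivation $\tau_{k+1}$. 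After relabelling $r=k+i$ (exactly the substitution used in Proposition~\ref{xxsec2.p1}), the terms in $U$ assemble into
$$
[(n+1)L_{n+1}(x)-\textstyle\sum_{k=0}^{n-1}\gamma_{k+1}L_{n-k}(x)]\,y=\gamma_{n+1}(x)\,y,
$$
while those in $V$, after applying the $\tau$-side recursion to handle the $d_j$'s on the right, collapse to $x\,\tau_{n+1}(y)$.

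The main obstacle is the bookkeeping in Step~3: unlike in Proposition~\ref{xxsec2.p1}, where the induction hypothesis only needed to be applied to elements of $\mathrm{Alg}\,\mathcal{L}$ paired with rank-one operators, here the induction hypothesis for $\gamma_{k+1}$ must be applied to products $L_i(x)\cdot d_j(y)$, so the generalized-derivation identity will split each term into a piece carrying $\gamma_{k+1}(L_i(x))$ on the left and a piece carrying $\tau_{k+1}(d_j(y))$ on the right. Keeping the "left" pieces and "right" pieces disentangled throughout the double summation, and ensuring that the "right" pieces actually reassemble (via the $\tau$-recursion plus $L_i$'s built from $\gamma$'s) into the single term $x\,\tau_{n+1}(y)$, is the technical heart of the argument; everything else is routine reindexing.
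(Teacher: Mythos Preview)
Your proposal is correct and follows the paper's proof essentially verbatim: define $\gamma_{n+1}$ by the stated recursion, expand $\gamma_{n+1}(xy)$, split into pieces $U$ and $V$ via $n{+}1=k+(n{+}1{-}k)$, reindex $U$ with $r=k+i$ to obtain $\gamma_{n+1}(x)\,y$, and collapse $V$ to $x\,\tau_{n+1}(y)$ using the $\tau$-side recursion for $D$. The obstacle you flag in your last paragraph is milder than you anticipate: in the $V$-sum the factors $L_i(x)$ for $i\ge 1$ simply drop out because the $\tau$-recursion annihilates their coefficients $(n{+}1{-}i)d_{n+1-i}(y)-\sum_{k=0}^{n-i}\tau_{k+1}d_{n-k-i}(y)$, so no expression of $L_i$ in terms of the $\gamma$'s is ever needed.
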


\begin{proof}
Let us show it by induction on $n$. If $n=0$, then
$$
L_1(xy)=L_1(x)y+xd_1(y)
$$
for all $x, y\in\mathcal{A}$. Let us write $\gamma_1=L_1$ and $\tau_1=d_1$.
Then $\gamma_1$ is a generalized derivation of $\mathcal{A}$ with associated derivation $\tau_1$.

Suppose that $\gamma_k$ is a generalized derivation of $\mathcal{A}$ with associated derivation $\tau_k$ and that associated derivation $\tau_k$ satisfies
$kd_{k}=\sum_{s=0}^{k-1}\tau_{s+1}d_{k-1-s}$
for each $k\leq n$. Let us define
$$
\gamma_{n+1}=(n+1)L_{n+1}-\sum_{k=0}^{n-1}\gamma_{k+1}L_{n-k}.
$$
It is sufficient to prove that $\gamma_{n+1}$ is a generalized derivation of $\mathcal{A}$.

For any $x,y \in\mathcal{A}$, we have
$$\begin{aligned}
\gamma_{n+1}( x  y )&=(n+1)L_{n+1}(xy)-\sum_{k=0}^{n-1}\gamma_{k+1}L_{n-k}(xy)\\
&=(n+1)\sum_{k=0}^{n+1}L_k(x)d_{n+1-k}(y) -\sum_{k=0}^{n-1}\gamma_{k+1}\left(\sum_{i=0}^{n-k}L_i(x)d_{n-k-i}(y) \right).
\end{aligned}
$$
Therefore,
$$\begin{aligned}
\gamma_{n+1} (x  y)  &=\sum_{k=0}^{n+1}(k+n+1-k) L_k(x)  d_{n+1-k}(y) -
\sum_{k=0}^{n-1}\gamma_{k+1}\left(\sum_{i=0}^{n-k} L_i(x)  d_{n-k-i}(y) \right)\\
&=\sum_{k=0}^{n+1}k L_k(x)  d_{n+1-k}(y) +\sum_{k=0}^{n+1} (n+1-k)L_k(x)d_{n+1-k}(y) \\
&\quad -\sum_{k=0}^{n-1}\sum_{i=0}^{n-k}\{ \gamma_{k+1} (L_i(x))   d_{n-k-i}(y) + L_i(x)  \tau_{k+1} (d_{n-k-i}(y))  \}.
\end{aligned}$$
Let us write
$$\begin{aligned}
U&=\sum_{k=0}^{n+1}k L_k(x)  d_{n+1-k}(y) -\sum_{k=0}^{n-1}\sum_{i=0}^{n-k} \gamma_{k+1}(L_i(x))  d_{n-k-i}(y),  \\
V&=\sum_{k=0}^{n+1} (n+1-k)L_k(x)d_{n+1-k}(y) -\sum_{k=0}^{n-1}\sum_{i=0}^{n-k} L_i(x)  \tau_{k+1}(d_{n-k-i}(y)).
\end{aligned}$$
Thus $\gamma_{n+1}[x, y]=U+V$. In the summands $\sum_{k=0}^{n-1}\sum_{i=0}^{n-k}$, we know that $0\leq k+i\leq n$
and $k\neq n$. If we set $r=k+i$, then
$$\begin{aligned}
U&=\sum_{k=0}^{n+1}k L_k(x)  d_{n+1-k}(y) -\sum_{r=0}^{n}\sum_{0\leq k\leq r,  k\neq n} \gamma_{k+1}(L_{r-k}(x))  d_{n-r}(y) \\
&=\sum_{k=0}^{n+1}k L_k(x)  d_{n+1-k}(y) -\sum_{r=0}^{n-1}\sum_{k=0}^{r} \gamma_{k+1}(L_{r-k}(x))  d_{n-r}(y) -
\sum_{k=0}^{n-1} \gamma_{k+1}(L_{n-k}(x))  y \\
&=\sum_{r=0}^{n}(r+1) L_{r+1}(x)  d_{n-r}(y) -\sum_{r=0}^{n-1}\sum_{k=0}^{r} \gamma_{k+1}(L_{r-k}(x)) d_{n-r}(y) -
\sum_{k=0}^{n-1} \gamma_{k+1}(L_{n-k}(x))  y \\
&=\sum_{r=0}^{n-1}[ (r+1)L_{r+1}(x)-\sum_{k=0}^{r}\gamma_{k+1}(L_{r-k}(x)) ] d_{n-r}(y) \\
&\quad +(n+1) L_{n+1}(x)  y -\sum_{k=0}^{n-1} \gamma_{k+1}(L_{n-k}(x))  y .
\end{aligned}$$
By the induction hypothesis, $(r+1)L_{r+1}(x)=\sum_{k=0}^{r}\gamma_{k+1}(L_{r-k}(x))$ for $r=0,\cdots,n-1$.
Therefore, we deduce that
$$
U= [(n+1)L_{n+1}(x)-\sum_{k=0}^{n-1}\gamma_{k+1}(L_{n-k}(x))]  y=\gamma_{n+1}(x)  y.
$$
On the other hand, a direct computation shows that
$$\begin{aligned}
V&=\sum_{k=0}^{n+1} (n+1-k)L_k(x)d_{n+1-k}(y) -\sum_{k=0}^{n-1}\sum_{i=0}^{n-k} L_i(x)  \tau_{k+1}(d_{n-k-i}(y)) \\
&=\sum_{k=0}^{n+1} (n+1-k)L_k(x)d_{n+1-k}(y) -\sum_{i=0}^{n}\sum_{k=0}^{n-i} L_i(x)  \tau_{k+1}(d_{n-k-i}(y)) + x  \tau_{n+1}(y) \\
&=\sum_{i=0}^{n} (n+1-i)L_i(x)d_{n+1-i}(y) -\sum_{i=0}^{n}\sum_{k=0}^{n-i} L_i(x)  \tau_{k+1}(d_{n-k-i}(y)) + x  \tau_{n+1}(y).
\end{aligned}$$
Using the induction hypothesis again, we obtain
$$\begin{aligned}
V&=\sum_{i=0}^{n}L_i(x)[(n+1-i)d_{n+1-i}(y)-\sum_{k=0}^{n-i}\tau_{k+1}(d_{n-k-i}(y))] + x  \tau_{n+1}(y) \\
&=x[(n+1)d_{n+1}(y)-\sum_{k=0}^{n-1}\tau_{k+1}(d_{n-k}(y))-\tau_{n+1}(y)] + x  \tau_{n+1}(y) \\
&= x  \tau_{n+1}(y) ,
\end{aligned}$$
where $\tau_{n+1}=(n+1)d_{n+1}-\sum_{k=0}^{n-1}\tau_{k+1}d_{n-k}$.
Hence $$\gamma_{n+1}(xy)=\gamma_{n+1}(x)y+x  \tau_{n+1}(y).$$ Whence $\gamma_{n+1}$ is a generalized  derivation of $\mathcal{A}$ with the associated derivation $\tau_{n+1}$ and
this completes the proof.
\end{proof}

The second main result in this section reads as follows.

\begin{theorem}\label{xxsec2.1}
Let $\mathcal{L}$ be a $\mathcal{J}$-subspace lattice on a Banach space $X$ over the
real or complex field $\mathbb{F}$ and $\mathrm{Alg}\mathcal{L}$ be the associated $\mathcal{J}$-subspace
lattice algebra. Suppose that $\{L_n\}_{n=0}^{\infty}: \mathrm{Alg}\mathcal{L}\rightarrow \mathrm{Alg}\mathcal{L}$ is a family of linear mappings and $\xi\in \Bbb{F}$ with $\xi\neq 1$. Then $\{L_n\}_{n=0}^{\infty}$ satisfies
$$
L_n([A, B]_{\xi})=\sum_{i+j=n}[L_i(A), L_j(B)]_{\xi}
$$ for any $A, B\in\mathrm{Alg}\mathcal{L}$ with $AB = 0$ if and only if one of the following statements hold.
\begin{enumerate}
\item $\xi\neq 0, $  $\{L_n\}_{n=0}^{\infty}$ is a higher derivation of $\mathrm{Alg}\mathcal{L}$.
 \item $\xi= 0, $  $\{L_n\}_{n=0}^{\infty}$ is a generalized higher derivation of $\mathrm{Alg}\mathcal{L}$.
 \end{enumerate}
\end{theorem}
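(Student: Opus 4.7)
The ``if'' direction is a straightforward verification: expanding $[A,B]_{\xi}=AB-\xi BA$ and using the (generalized) higher derivation identity, one checks that everything cancels, and in fact no restriction $AB=0$ is needed. I would just write out the two cases quickly and move on.

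For the ``only if'' direction, my plan is to reduce to a family of ordinary (generalized) derivations via Proposition \ref{xxsec3.p1}, and then run an inductive bootstrapping on the recursion $(n+1)L_{n+1}=\sum_{k=0}^{n}\delta_{k+1}L_{n-k}$. Concretely: Proposition \ref{xxsec3.p1} yields a sequence $\{\delta_n\}_{n\ge 1}$ of $\xi$-Lie derivations on zero products. By Lemma \ref{xxsec3.l2}, when $\xi\ne 0$ each $\delta_n$ is a derivation of $\mathrm{Alg}\mathcal{L}$, and when $\xi=0$ each $\delta_n$ is a generalized derivation with an associated (usual) derivation $\tau_n$. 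Note also that no zero-product hypothesis is needed at this point: the $\delta_n$ and $\tau_n$ are genuine derivations on the whole algebra.

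\textbf{Case $\xi\ne 0$.} I would argue by induction on $n$ that $L_n(AB)=\sum_{i+j=n}L_i(A)L_j(B)$ for all $A,B\in\mathrm{Alg}\mathcal{L}$. The base $n=1$ is immediate because $L_1=\delta_1$ is a derivation. For the inductive step, apply $(n+1)L_{n+1}=\sum_{k=0}^{n}\delta_{k+1}L_{n-k}$ to $AB$, use the induction hypothesis to expand $L_{n-k}(AB)$, and use the Leibniz rule for each $\delta_{k+1}$. After reindexing, the inner sum $\sum_{k+i=m}\delta_{k+1}(L_i(A))$ collapses to $(m+1)L_{m+1}(A)$ by the same recursion, and similarly on the $B$ side; combining the two pieces one obtains $(n+1)\sum_{p=0}^{n+1}L_p(A)L_{n+1-p}(B)$, giving the desired identity.

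\textbf{Case $\xi=0$.} Here I want to build, in parallel with $\{L_n\}$, a higher derivation $\{d_n\}$ by the recursion $d_0=\mathrm{id}$, $(n+1)d_{n+1}=\sum_{k=0}^{n}\tau_{k+1}d_{n-k}$. A first induction (exactly of the type in Case $\xi\ne 0$, but applied to the $\tau_k$'s which are honest derivations) shows $\{d_n\}$ is a higher derivation. A second induction, parallel to Case $\xi\ne 0$ but using $\delta_{k+1}(XY)=\delta_{k+1}(X)Y+X\tau_{k+1}(Y)$ in the expansion, establishes $L_n(AB)=\sum_{i+j=n}L_i(A)d_j(B)$. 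The step where the two pieces combine is again a reindexing: the $\delta_{k+1}(L_i(A))$-part collapses via the $L$-recursion, while the $\tau_{k+1}(d_j(B))$-part collapses via the $d$-recursion, and one gets $(n+1)L_{n+1}(AB)=(n+1)\sum_{i+j=n+1}L_i(A)d_j(B)$.

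\textbf{Main obstacle.} I do not expect a serious geometric or operator-theoretic difficulty here, since the local/zero-product structure of $\mathrm{Alg}\mathcal{L}$ has been entirely absorbed by Proposition \ref{xxsec3.p1} and Lemma \ref{xxsec3.l2}. The real work is combinatorial: making the double-sum reindexing clean, especially in the $\xi=0$ case where one must keep track of \emph{two} coupled recursions (the $L$-recursion with the $\delta$'s and the $d$-recursion with the $\tau$'s). Lemma \ref{xxsec3.l5} is the model for how the $d$-recursion interacts with the $\tau$-recursion, so I would cite it (or its proof pattern) to justify that $\{d_n\}$ is a higher derivation without redoing that calculation in full.
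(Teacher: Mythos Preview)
Your plan for the ``only if'' direction is exactly the paper's: invoke Proposition~\ref{xxsec3.p1} and Lemma~\ref{xxsec3.l2} to obtain (generalized) derivations $\delta_n$ (and associated derivations $\tau_n$ when $\xi=0$), then run the induction on the recursion $(n+1)L_{n+1}=\sum_{k}\delta_{k+1}L_{n-k}$, building the auxiliary $\{d_n\}$ via $(n+1)d_{n+1}=\sum_k\tau_{k+1}d_{n-k}$ precisely as the paper does in its Case~2 computation. For $\xi\neq0$ the paper is terser---it cites Theorem~\ref{xxsec3t1} together with \cite[Theorem~2.4]{Han1} rather than writing out the induction---but the underlying argument is the one you describe.

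One caution on the ``if'' direction when $\xi=0$: it is \emph{not} the bare expansion you suggest. Since $[A,B]_0=AB$, the right-hand side of the hypothesis is $\sum_{i+j=n}L_i(A)L_j(B)$, whereas the generalized-higher-derivation identity only yields $\sum_{i+j=n}L_i(A)d_j(B)=0$ when $AB=0$; replacing $d_j$ by $L_j$ requires an extra ingredient (essentially that each $L_n(I)$ acts centrally, so that the difference $L_j(B)-d_j(B)$ contributes nothing after multiplying by $L_i(A)$ on the left and using $AB=0$). The paper does not attempt a direct verification here either: it routes the ``if'' part through Lemma~\ref{xxsec3.l5} (decompose $\{L_n\}$ into generalized derivations $\gamma_n$), Lemma~\ref{xxsec3.l2} (each $\gamma_n$ is a $0$-Lie derivation on zero products), and the surjectivity half of Theorem~\ref{xxsec3t1} (reassemble). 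You should follow that route, or else make the centrality step explicit, rather than asserting that the identity falls out of the definitions.
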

\begin{proof}

\textbf{Case 1.} (a) $\xi\neq 0$

The proof can be obtained by using Lemma \ref{xxsec3.l2}, Theorem \ref{xxsec3t1} and \cite[Theorem 2.4]{Han1}.

\textbf{Case 2.} (b) $\xi= 0$

The ``if'' part can be obtained from Lemma \ref{xxsec3.l2}, Lemma \ref{xxsec3.l5} and Theorem \ref{xxsec3t1}.
We need only to  consider the ``only if'' part.

Proposition \ref{xxsec3.p1} means that there is a family $\{\delta_n\}_{n=0}^\infty$ of linear mappings satisfying $\delta_n([A, B]_{\xi})=[\delta_n(A), B]_{\xi}+[A, \delta_n(B)]_{\xi}$ whenever $A, B \in \mathrm{Alg}\mathcal{L}$ with $AB= 0$ such that
$
(n+1)L_{n+1}=\sum_{k=0}^n\delta_{k+1}L_{n-k}
$
for each nonnegative integer $n$. Now each $\delta_n$ is a generalized derivation, which is due to Lemma \ref{xxsec3.l2}. We might as well assume the associated derivation of $\delta_n$ be $\tau_n$ for each nonnegative integer $n$.

To prove $\{L_n\}_{n=0}^{\infty}$ is a generalized higher derivation, let us take an inductive approach for the index $n$. It is clear that $L_1=\delta_1$ is a generalized derivation of $\mathrm{Alg}\mathcal{L}$ with the associated derivation $\tau_1$. Suppose that
$L_k(AB)=\sum_{i=0}^kL_i(A)d_{k-i}(B)$ for all $A, B \in \mathrm{Alg}\mathcal{L}$ and
$kd_k=\sum_{s=0}^{k-1}\tau_{s+1}d_{k-1-s}$ for all $k\leq n$.

Let us now prove that $L_{n+1}(AB)=\sum_{i=0}^{n+1}L_i(A)d_{n+1-i}(B)$ for all $A, B \in \mathrm{Alg}\mathcal{L}$. Note that
$$\begin{aligned}
(n+1)L_{n+1}(AB)&=\sum_{k=0}^n \delta_{k+1}L_{n-k}(AB)\\
&=\sum_{k=0}^{n }\sum_{i=0}^{n-k}\delta_{k+1}(L_i(A)
d_{n-k-i}(B))\\&=\sum_{k=0}^n \sum_{i=0}^{n-k}\left\{[\delta_{k+1}(L_i(A))d_{n-k-i}(B)+L_i(A)\tau_{k+1}(d_{n-k-i}(B))]\right\}\\&=\sum_{i=0}^n\left(\sum_{k=0}^{n-i}\delta_{k+1}L_{n-i-k}(A)\right)d_i(B)+
\sum_{i=0}^nL_i(A)\left(\sum_{k=0}^{n-i}\tau_{k+1}d_{n-i-k}(B)\right).
\end{aligned}$$
So by  the induction hypothesis we can  do the following computation.
$$
\begin{aligned}
(n+1)L_{n+1}(AB)
&=\sum_{i=0}^n(n-i+1)L_{n-i+1}(A)d_i(B)+A \sum_{k=0}^{n}\tau_{k+1}d_{n-k}(B)\\
&\ +\sum_{i=1}^nL_i(A) (n-i+1)d_{n-i+1}(B)\\\
&=\sum_{i=1}^{n+1}iL_i(A)d_{n+1-i}(B)+A \sum_{k=0}^{n}\tau_{k+1}d_{n-k}(B)\\
&\ +\sum_{i=1}^n(n+1-i)L_i(A)d_{n-i+1}(B)\\
&=\sum_{i=1}^{n+1}(n+1)L_i(A)d_{n+1-i}(B)+A \sum_{k=0}^{n}\tau_{k+1}d_{n-k}(B)\\
&=(n+1)\sum_{k=0}^{n+1}L_k(A)d_{n+1-k}(B),
\end{aligned}
$$
where $(n+1)d_{n+1}=\sum_{k=0}^{n}\tau_{k+1}d_{n-k}$. This completes the
proof.

\end{proof}


\begin{thebibliography}{}

\bibitem[1]{Bresar} M. Bre$\check{\rm s}$ar,
{\em Characterizing homomorphisms, derivations and multipliers in rings with
idempotents}, Proc. Roy. Soc. Edinburgh Sect. A., \textbf{137} (2007), 9-21.



\bibitem[2]{ChebotarKeLeeWong} M. A. Chebotar, W.-F. Ke, P.-H. Lee, and N.-C. Wong,
{\em Mappings preserving zero products}, Studia
Math., \textbf{155} (2003), 77-94.



\bibitem[3]{FerreroHaetinger1} M. Ferrero and C. Haetinger,
{\em Higher derivations and a theorem by Herstein},
Quaest. Math., \textbf{25} (2002), 249-257.



\bibitem[4]{FerreroHaetinger2} M. Ferrero and C. Haetinger,
{\em Higher derivations of semiprime rings}, Comm. Algebra, \textbf {30} (2002), 2321-2333.


\bibitem[5]{Han} D. Han,
{\em Higher derivations on Lie ideals of triangular algebras}, Siberian Math. J.,   \textbf{53} (2012), 1029-1036.


\bibitem[6]{Han1} D. Han,
{\em Lie-type higher derivations on operator algebras}, Bull. Iran Math. Soc.,   \textbf{40} (2014), 1169-1194.


\bibitem[7]{Hazewinkel} M. Hazewinkel,
{\em Hasse-Schmidt derivations and the Hopf algebra of noncommutative symmetric functions}, Axioms, \textbf{1} (2012), 149-154.


\bibitem[8]{JiQi} P.-S. Ji, and W.-Q. Qi,
{\em Characterizations of Lie derivations of triangular algebras}, Linear
Algebra Appl., \textbf{435} (2011), 1137-1146.


\bibitem[9]{LiGuo} J.-K. Li and J.-B. Guo,
{\em Characterizations of higher derivations
and Jordan higher derivations on CSL algebras},  Bull. Aust. Math.
Soc., \textbf{83} (2011), 486-499.



\bibitem[10]{Longstaff} W. E. Longstaff,
{\em Strongly reflexive lattices}, J. Lond. Math. Soc., \textbf{11} (1975), 491-498.



\bibitem[11]{LongstaffPanaia} W. E. Longstaff and O. Panaia,
{\em  $\mathcal{J}$-subspace lattices and subspace $M$-bases }, Stud. Math.,
 \textbf{139} (2000), 197-211.



\bibitem[12]{Lu}  F.-Y. Lu,
{\em Lie derivations of $\mathcal{J}$-subspace lattice algebras}, Proc.  Amer. Math. Soc., \textbf{135} (2007), 2581-2590.



\bibitem[13]{LuJing} F.-Y. Lu and W. Jing,
{\em Characterizations of Lie derivations of $\mathcal{B}(X)$}, Linear Algebra Appl., \textbf{432} (2009), 89-99.



\bibitem[14]{LuLi} F.-Y. Lu and P.-T. Li,
{\em Algebraic isomorphisms and Jordan derivations of $\mathcal{J}$-subspace lattice algebras},
Studia Math., \textbf{158} (2003), 287-301.



\bibitem[15]{LiPanShen} J.-K. Li, Z.-D. Pan and Q.-H. Shen,
{\em Jordan and Jordan higher all-derivable
points of some algebras}, Linear Multilinear Algebra, \textbf{61} (2013), 831-845.



\bibitem[16]{Nowicki} A. Nowicki,
{\em Inner derivations of higher orders}, Tsukuba J.
Math., \textbf{8}(2) (1984), 219-225.



\bibitem[17]{Panaia} O. Panaia,
{\em Quasi-spatiality of isomorphisms for certain classes of
operator algebras},  Ph.D. dissertation, University of Western
Australia, 1995.



\bibitem[18]{Qi} X.-F. Qi,
{\em Characterization of (generalized) Lie derivations on $\mathcal{J}$-subspace lattice algebras by local action}, Aequationes Math., \textbf{87} (2014), 53-69.



\bibitem[19]{QiCuiHou} X.-F. Qi, J.-L. Cui and J.-C. Hou,
{\em Characterizing additive $\xi$-Lie derivations of prime algebras by $\xi$-Lie zero products}, Linear Algebra Appl., \textbf{434} (2011), 669-682.



\bibitem[20]{QiHou} X.-F. Qi and J.-C. Hou,
{\em Additive Lie ($\xi$-Lie) derivations and generalized Lie ($\xi$-Lie) derivations on nest algebras}. Linear Algebra Appl., \textbf{431} (2009), 843-854.



\bibitem[21]{QiHou1} X.-F. Qi and J.-C. Hou,
{\em Linear maps Lie derivable at zero on $\mathcal{J}$-subspace lattice algebras}, Stud. Math., \textbf{197}(2) (2010), 157-169.




\bibitem[22]{QiHou2} X.-F. Qi and J.-C. Hou,
{\em Characterizations of derivations of Banach space nest algebras:
All-derivable point}, Linear Algebra Appl., \textbf{432} (2010), 3183-3200.



\bibitem[23]{QiHou3} X.-F. Qi and J.-C, Hou,  {\em Characterization of Lie derivations on prime rings}, Comm. Algebra, \textbf{39} (2011), 3824-3835.



\bibitem[24]{QiHou4}  X.-F. Qi and J.-C. Hou, {\em Lie higher derivations
on nest algebras}, Commun. Math. Res., \textbf{26} (2010), 131-143.



\bibitem[25]{WeiXiao} F. Wei and Z.-K. Xiao, {\em Higher derivations
of triangular algebras and its applications}, Linear Algebra Appl., \textbf{435} (2011), 1034-1054.




\bibitem[26]{XiaoWei1} Z.-K. Xiao and F. Wei, {\em Jordan higher derivations
on triangular algebras}, Linear Algebra Appl., \textbf{432} (2010),
2615-2622.



\bibitem[27]{XiaoWei2} Z.-K. Xiao and F. Wei, {\em Nonlinear Lie higher derivations on triangular algebras}, Linear Multilinear A., \textbf{60}
(2012), 979-994.


\bibitem[28]{Zhou} J.-R. Zhou,
{\em Linear mappings derivable at some nontrivial elements}, Linear Algebra Appl., \textbf{435} (2011), 1972-1986.



\bibitem[29]{ZhuXiongZhang} J. Zhu, C.-P. Xiong, and R.-Y. Zhang,
{\em All-derivable points in the algebra of all upper triangular
matrices}, Linear Algebra Appl., \textbf{429} (2008),  804-818.




\end{thebibliography}
\end{document}